\definecolor {processblue}{cmyk}{0.96,0,0,0}
\newtheorem{Theorem}{Theorem}[section]
\newtheorem{Proposition}{Proposition}[section]
\newtheorem{Lemma}{Lemma}[section]
\newtheorem{Corollary}{Corollary}[section]
\newtheorem{Remark}{Remark}[section]
\newcommand*{\rom}[1]{\expandafter\@slowromancap\romannumeral #1@}
\begin{document}
\title{xxxx}
\date{}
 \title{Arithmetic on self-similar sets}
 \author{Bing Zhao, Xiaomin Ren, Jiali Zhu and Kan Jiang \thanks{Kan Jiang is the corresponding author}}
\maketitle{}
\begin{abstract}
Let $K_1$ and $K_2$ be two one-dimensional homogeneous self-similar sets. 
Let $f$
be a continuous function defined on an open set $U\subset \mathbb{R}^{2}$.
Denote the  continuous image of $f$  by
$$
f_{U}(K_1,K_2)=\{f(x,y):(x,y)\in (K_1\times K_2)\cap U\}.
$$
In this paper we give an sufficient condition which guarantees that $f_{U}(K_1,K_2)$ contains some interiors. Our result is different from Simon and Taylor's \cite[Proposition 2.9]{ST} as we do not need the condition that  the multiplication of the thickness of $K_1$ and $K_2$ is strictly greater than $1$. As a consequence, we give an application to the univoque sets in the setting of $q$-expansions.  

\end{abstract} 
\section{Introduction}
Arithmetic on some sets was  pioneered by Steinhuas  who proved the following classical result:  for any set $E\subset \mathbb{R}^n$ with positive Lebesgue measure, $E-E=\{x-y:x,y\in E\}$ contains interiors.    It is natural to consider the Steinhuas' result when $E$ is of zero Lebesgue measure.  Indeed, it is an important topic in fractal geometry and dynamical systems.  Let $C$ be   the middle-third Cantor set.
Let $C*C=\{x*y:x,y\in C\}$, where $*=+,-,\cdot,\div$ (when $*=\div$, we assume $y\neq 0$). 
 Steinhuas \cite{HS}  proved that $$C+C=[0,2],C-C=[-1,1].$$
Athreya,  Reznick, and Tyson \cite{Tyson} proved that $$C\div C=\cup_{-\infty}^{+\infty}3^{n}[2/3,3/2].$$  
The sum of  two Cantor sets appears naturally in  homoclinic bifurcations \cite{Palis}. Palis \cite{Palis} posed the following problem: whether it is true (at least generically) that the arithmetic  sum of dynamically defined Cantor sets either has measure zero or contains an interval. This conjecture was solved in \cite{Yoccoz}.  Motivated by Palis' conjecture, it is natural to investigate when the sum of two Cantor sets contains some interiors. Newhouse \cite{SN} proved the following celebrated results. Given any two Cantor sets $C_1$ and $C_2$,   if $\tau(C_1)\tau(C_2)>1$, where $\tau(C_i),i=1,2$ denotes the thickness of $C_i, i=1,2,$ then $C_1+C_2$ contains some interiors.  Motivated by Newhouse's result,  Simon and Taylor\cite[Proposition 2.9]{ST}  proved  that for any $C^2$ functions 
$H(x,y)$ defined on some open set $U$, if two Cantor sets $C_1$  and  $C_2$  have  the property  $\tau(C_1)\tau(C_2)>1$,  and the partial derivatives of $H(x,y)$ are not vanishing  for 
almost everywhere in $U$, then $H(C_1, C_2)=\{H(x,y):x\in C_1, y\in  C_2\}$ contains some intervals. 
Simon and Taylor's result is elegant, however,  their result is not true   when $\tau(C_1)\tau(C_2)\leq 1$.  The theme of this paper is to weaken this condition for some fractal sets.  

 Let $f$
be a continuous function defined on an open set $U\subset \mathbb{R}^{2}$, and $E,F$ be two non-empty sets in $\mathbb{R}$.
Denote the  continuous image of $f$ by
$$
f_{U}(E,F)=\{f(x,y):(x,y)\in (E\times F)\cap U\}.
$$
 Let  $K_1$ and $K_2$ be the attractors of the   IFS's $\{f_i(x)=\lambda x+a_i\}_{i=1}^{n}$ and $\{g_j(x)=\lambda x+b_j\}_{j=1}^{m}$, respectively, where $ a_i,b_j\in \mathbb{R}, 0<\lambda<1.$  
   Let the convex hull of $K_1$($K_2$) be $[a,b]$($[c,d]$). Without loss of generality, we may assume $ f_1(a)=a,  f_n(b)=b$ ($ g_1(c)=c,  g_m(d)=d$), and  $$f_i(a)\leq  f_{i+1}(a),1\leq  i\leq n-1, g_j(c)\leq  g_{j+1}(c),1\leq  j\leq m-1.$$
     Let $$D_1=\{1\leq i\leq n-1:f_i(b)-f_{i+1}(a)<0\}, \kappa_1=\max_{i\in D_1}\{f_{i+1}(a)-f_i(b)\}.$$
     $$D_2=\{1\leq j\leq m-1:g_j(d)-g_{j+1}(c)<0\}, \kappa_2=\max_{j\in D_2}\{g_{j+1}(c)-g_j(d)\}.$$
\begin{Theorem}\label{Main2}
Let $K_1,K_2$ be the attractors  defined as above.  
If there is some $(x_0,y_0)\in (K_1\times K_2)\cap U$ such that  
$$\dfrac{\kappa_1}{d-c}<\left\vert \dfrac{\partial_y f|_{(x_0,y_0)}}{\partial_x f|_{(x_0,y_0)}}\right\vert <\dfrac{\lambda(b-a)}{\kappa_2},$$
then $f_U(K_1,K_2)$ contains some interiors. 
\end{Theorem}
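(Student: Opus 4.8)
The plan is to reduce the statement to a covering property for the first-level cells of the product $K_1\times K_2$ and then to bootstrap it through the self-similar structure. Write $p=\partial_x f|_{(x_0,y_0)}$ and $q=\partial_y f|_{(x_0,y_0)}$; the two strict inequalities say precisely that $|q/p|$ lies in the open interval $(\kappa_1/(d-c),\,\lambda(b-a)/\kappa_2)$, so in particular $p,q\neq0$. Since $\partial_x f,\partial_y f$ are continuous, I would first fix a margin $\delta>0$ and a small neighbourhood $V\ni(x_0,y_0)$ with $V\subset U$ on which $|\partial_y f/\partial_x f|$ still lies in $(\kappa_1/(d-c)+\delta,\ \lambda(b-a)/\kappa_2-\delta)$ and on which both partials keep a fixed sign. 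After a harmless reflection $y\mapsto -y$ (which preserves $\lambda$, $\kappa_2$, $d-c$) I may assume the local gradient has positive entries. Because $a,b\in K_1$ and $c,d\in K_2$ (they are the fixed points of $f_1,f_n,g_1,g_m$), the point $(x_0,y_0)$ sits in a nested family of product cylinder cells $Q_k=I_1^{(k)}\times I_2^{(k)}$ of common depth $k$ and side lengths $\lambda^k(b-a)$ and $\lambda^k(d-c)$; I fix one such cell $Q_0=Q_{k_0}\subset V$ with $k_0$ large, to be specified.

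The heart of the argument is a covering lemma: for every equal-depth product cell $Q\subset V$, the $nm$ child cells $B_{ij}$ (obtained by applying $f_i$ in the first coordinate and $g_j$ in the second) satisfy $\bigcup_{i,j} f(B_{ij})=f(Q)$. In the linear model $f(x,y)=px+qy$ this is an exact computation: two horizontally consecutive columns of cells have overlapping $f$-images exactly when $p\kappa_1\le q(d-c)$, i.e. when $|q/p|\ge \kappa_1/(d-c)$ (this is where the lower bound enters, comparing a horizontal gap with the full height $d-c$ of $K_2$), while within a single column two vertically consecutive cells have overlapping $f$-images exactly when $q\kappa_2\le p\lambda(b-a)$, i.e. when $|q/p|\le \lambda(b-a)/\kappa_2$ (the upper bound, comparing a vertical gap with the width $\lambda(b-a)$ of one column). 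Since $\kappa_1,\kappa_2$ are the maximal first-level gaps, these two comparisons control every adjacency at once, so the cell-images form a connected chain; as the gradient does not vanish, the extreme cells $B_{11},B_{nm}$ realise $\min_Q f$ and $\max_Q f$, whence the connected union is exactly $f(Q)$. For the genuinely nonlinear $f$ I would run the same comparison using the local linearisation at the base point of each cell: the overlaps produced above have size a fixed positive multiple of $\delta\lambda^k$, whereas the $C^1$ deviation of $f$ from its linearisation on a cell of diameter $\asymp\lambda^k$ is $o(\lambda^k)$, so for $k_0$ large the overlaps persist uniformly over all descendants of $Q_0$.

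Granting the covering lemma, the conclusion follows by a nested-cell (intermediate value) recursion. Given any target value $\tau\in\mathrm{int}\,f(Q_0)$, the lemma furnishes a child cell whose image contains $\tau$; applying the lemma again inside that child, and iterating, yields a nested sequence of product cylinder cells, each with $f$-image containing $\tau$ and with diameters tending to $0$. Their intersection is a single point of $K_1\times K_2$, and by continuity its $f$-value equals $\tau$; thus $\tau\in f_U(K_1,K_2)$. Letting $\tau$ range over the open interval $\mathrm{int}\,f(Q_0)$ (nonempty since $\nabla f\neq0$ makes $f$ nonconstant on $Q_0$) shows $f_U(K_1,K_2)\supseteq\mathrm{int}\,f(Q_0)$, an interval.

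I expect the main obstacle to be exactly the passage from the linear model to the nonlinear $f$: one must show that the two overlap inequalities, which hold with sharp fixed constants in the linear case, survive uniformly across all depths once $f$ is only $C^1$. This is where the strict inequalities are indispensable---they supply the positive margin $\delta$ that dominates the $o(\lambda^k)$ linearisation error at every scale---and where the hypothesis that both IFS share the same contraction ratio $\lambda$ is essential, since it is what keeps the child cells geometrically similar to their parent and makes the overlap estimate scale-invariant.
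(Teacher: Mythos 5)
Your proposal is correct and follows essentially the same route as the paper: your covering lemma is precisely the paper's condition $f(I,J)=f(\widetilde{I},\widetilde{J})$ fed into its Lemma 2.1, your two overlap comparisons (vertical gap $\kappa_2$ against the column width $\lambda(b-a)$, horizontal gap $\kappa_1$ against the full height $d-c$) are exactly the paper's two displayed estimates with the same $o(\lambda^{k})$ linearisation argument, and your nested-cell recursion is just a pointwise rephrasing of that lemma's compactness argument. The only cosmetic difference is that you dispose of the sign cases by coordinate reflections, where the paper uses $-f$ together with a symmetric second theorem whose proof it leaves to the reader.
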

The following result can be obtained easily in terms of Theorem \ref{Main2}. 
\setcounter{Corollary}{1}
\begin{Corollary}
Let $K_1,K_2$ be the attractors  defined as above.  If \begin{equation*}
\left\lbrace\begin{array}{cc}
               \lambda(b-a)>\kappa_2\\
                \kappa_1<d-c,\\
                \end{array}\right.
\end{equation*}  then for $f(x,y)=x^\alpha\pm y^{\alpha}, \alpha\neq 0, xy, \dfrac{x}{y},  $
$f_U(K_1,K_2)$ contains some interiors. 
\end{Corollary}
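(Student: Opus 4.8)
The plan is to derive the Corollary directly from Theorem \ref{Main2} by checking, for each listed function, that the ratio of partial derivatives occurring there can be placed inside the prescribed interval. First I would record the partial derivatives. For $f(x,y)=xy$ we have $\partial_x f=y$ and $\partial_y f=x$, so $\left\vert \partial_y f/\partial_x f\right\vert=\left\vert x/y\right\vert$; for $f(x,y)=x/y$ we have $\partial_x f=1/y$ and $\partial_y f=-x/y^{2}$, giving again $\left\vert \partial_y f/\partial_x f\right\vert=\left\vert x/y\right\vert$; and for $f(x,y)=x^{\alpha}\pm y^{\alpha}$ we have $\partial_x f=\alpha x^{\alpha-1}$ and $\partial_y f=\pm\alpha y^{\alpha-1}$, hence $\left\vert \partial_y f/\partial_x f\right\vert=\left\vert y/x\right\vert^{\alpha-1}$. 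In every case this ratio equals $1$ exactly on the locus $|x|=|y|$ (when $\alpha=1$ the power case is affine and the ratio is identically $1$).

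Next I would translate the two hypotheses. The inequalities $\lambda(b-a)>\kappa_2$ and $\kappa_1<d-c$ are precisely $\frac{\lambda(b-a)}{\kappa_2}>1$ and $\frac{\kappa_1}{d-c}<1$, so the open interval $\left(\frac{\kappa_1}{d-c},\frac{\lambda(b-a)}{\kappa_2}\right)$ from Theorem \ref{Main2} is nonempty and contains $1$ in its interior. Since each ratio above is continuous off the coordinate axes and equals $1$ on $\{|x|=|y|\}$, it then suffices to exhibit one point $(x_0,y_0)\in(K_1\times K_2)\cap U$ lying on (or, by the openness just noted, merely near) this locus and away from $x=0$ and $y=0$; at such a point the ratio falls in the target interval and no relevant partial derivative vanishes, so Theorem \ref{Main2} applies verbatim and yields that $f_U(K_1,K_2)$ contains interiors.

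The heart of the matter is thus producing this point. Here I would exploit the self-similar structure: since $a,b\in K_1$ and $c,d\in K_2$, the iterates of the extreme maps place the whole sequences $\{a+\lambda^{k}(b-a)\}_{k\ge0}$ in $K_1$ and $\{c+\lambda^{k}(d-c)\}_{k\ge0}$ in $K_2$, and this abundance of admissible pairs lets one tune $|x_0/y_0|$ into the interval around $1$; the openness of that interval supplies the slack needed so that an exact diagonal coincidence of the two Cantor-type sets is not required. I expect the genuine obstacle to be exactly this existence step --- keeping the chosen point simultaneously inside $K_1\times K_2$, inside $U$, and off the axes $x=0$, $y=0$ where the power and quotient functions degenerate. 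Once the point is secured, the conclusion is an immediate appeal to Theorem \ref{Main2}.
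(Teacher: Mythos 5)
Your first two paragraphs reproduce what is surely the paper's intended argument --- the paper in fact prints no proof of this Corollary at all, only the sentence that it ``can be obtained easily in terms of Theorem \ref{Main2}'' --- and that part is correct: the derivative ratios are $\left\vert x/y\right\vert$ for $f=xy$ and $f=x/y$, and $\left\vert y/x\right\vert^{\alpha-1}$ for $f=x^{\alpha}\pm y^{\alpha}$, and the two hypotheses say exactly that the open interval $\bigl(\kappa_1/(d-c),\,\lambda(b-a)/\kappa_2\bigr)$ of Theorem \ref{Main2} contains $1$. In particular for $\alpha=1$, where the ratio is identically $1$, your argument is already complete.

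The problem is the existence step you flag in your last paragraph: it is a genuine gap, and your tuning argument cannot close it. The hypotheses $\lambda(b-a)>\kappa_2$ and $\kappa_1<d-c$ involve only lengths and gap sizes, so they are unchanged when $K_2$ is translated; the admissible ratios are not. Concretely, let $K_1$ be the attractor of $\{0.4x+0.6,\ 0.4x+1.2\}$, so $[a,b]=[1,2]$, $\lambda=0.4$, $\kappa_1=0.2$, and let $K_2=K_1+100$, so $[c,d]=[101,102]$, $\kappa_2=0.2$, $d-c=1$. The hypotheses hold ($0.4>0.2$ and $0.2<1$), yet every $(x,y)\in K_1\times K_2$ has $x/y\le 2/101$, which is below the lower endpoint $\kappa_1/(d-c)=0.2$; likewise $y/x\ge 101/2$ exceeds the upper endpoint $\lambda(b-a)/\kappa_2=2$. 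Hence for $f=xy$, $f=x/y$, and for instance $f=x^{2}\pm y^{2}$, there is \emph{no} point of $(K_1\times K_2)\cap U$ at which the hypothesis of Theorem \ref{Main2} holds, no matter how one chooses points of the form $a+\lambda^{k}(b-a)$, $c+\lambda^{j}(d-c)$: all ratios $x/y$ are trapped in $[a/d,\,b/c]$, far from $1$, and the diagonal $\{|x|=|y|\}$ does not come near $K_1\times K_2$ at all. So for these functions the Corollary simply does not follow from Theorem \ref{Main2} without an additional assumption on the relative position of $K_1$ and $K_2$ --- for example that $(K_1\times K_2)\cap U$ contains a point with $|x_0/y_0|$ in the stated interval, which is automatic when $K_1=K_2$, as in the paper's own applications (Corollaries \ref{CCC} and \ref{Cor1}, where the point $(x_0,y_0)$ is exhibited explicitly and the ratio checked by hand). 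To be fair, this defect sits in the paper's unstated ``easy'' proof as much as in your proposal; your write-up has the virtue of making the suppressed step explicit, but it does not --- and, as the translation example shows, cannot in general --- fill it.
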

\setcounter{Remark}{2}
\begin{Remark}
We have the following remarks.
\begin{itemize}
\item [(1)]  We compare Theorem \ref{Main2} with Simon and Taylor's result.
First, our result can be  checked directedly.    More importantly, we do not need the condition that   the multiplication of the  thickness is strictly greater than $1$.  

\item [(2)] For the  irreducible graph-directed self-similar set (inhomogeneous self-similar set), we are able to construct a homogeneous self-similar set such that it can  arbitrarily approximate the irreducible graph-directed self-similar set (inhomogeneous self-similar set) in the sense of Hausdorff dimension \cite[Theorem 1.1]{Farkas}, \cite[Proposition 6]{PS}. Thus, we can consider similar problem on the 
 irreducible graph-directed self-similar set (inhomogeneous self-similar set).
 
\item [(3)]  Theorem  \ref{Main2} is very useful to many other fractal sets, for instance, the set of points with multiple codings,  some attractors generated by the open dynamics  and so forth. 
The usage of Theorem \ref{Main2} is as follows: for a given fractal set, we firstly find a sub self-similar set of this fractal set (if it is an inhomogeneous self-similar set, then we use the second remark above), then we utilize Theorem \ref{Main2} to obtain partial results.  We emphasize here that  \cite[Theorem 1.1]{Farkas}, \cite[Proposition 6]{PS} is very helpful to analyze the general fractal sets. 
\end{itemize}
\end{Remark}
We give an application to $q$-expansions. 
Let $1<q<2$. It is well-known that  for every $x\in\left[0,\dfrac{1}{q-1}\right]$  there is some $(a_n)\in\{0,1\}^{\mathbb{N}}$  such that 
$$x=\sum_{n=1}^{\infty}\dfrac{a_n}{q^n}.
$$
We call $(a_n)$ an $q$-expansion of  $x$. Typically, $x$ has multiple expansions \cite{Sidorov,KarmaMartijn}. If $x$ has a unique expansion, then we call $x$ a univoque point. 
Write $U_q$ for the set of points with unique expansions.  Sidorov proved in \cite{SN} that for any $q\geq T_3$, then 
$$U_q+U_q=\left[0,\dfrac{2}{q-1}\right],$$
where $T_3$ is the Pisot number satisfying 
$x^3=x^2+x+1.$ He also posed a question  that finding the smallest base $q$ such that 
$U_q+U_q=\left[0,\dfrac{2}{q-1}\right]$.  In \cite{DKKL}, Dajani, Komornik, Kong and Li proved   that $\mathcal{U}(x)*\mathcal{U}(x)$ contains interiors, where $*=+ ,-,\cdot , \div$, and $\mathcal{U}(x)$ is the set of bases for which the expansion of $x\in(0,1]$ is unique.  The main tool of proving  the above two results is the Newhouse thickness theorem. Motivated by their results, it is natural to consider the arithmetic on the univoque set $U_q$.  To the best of our knowledge,  there are very few results concerning with   $f_{U}(U_q,U_q)$, where $f(x,y)$ is a general function. 
The following results are corollaries of Theorem \ref{Main2}. 
\setcounter{Corollary}{3}
\begin{Corollary}\label{CCC}
Let  $q_{*}\approx 1.8019$ be the appropriate root of 
$$x^3-x^2-2x+1=0.$$ For any $q\in(q_{*},2)$, 
if there is some $(x_0,y_0)\in (K_q\times K_q)\cap U$ such that 
\begin{equation*}
1-2\lambda<\left\vert \frac{\partial
_{y}f|_{(x_{0},y_{0})}}{\partial _{x}f|_{(x_{0},y_{0})}}\right\vert <\dfrac{\lambda}{1-2\lambda},
\end{equation*}
where $\lambda=q^{-2}$ and $K_q$ is the attractor of the IFS 
$$\left\{g_1(x)=\dfrac{x+1}{q^2}, g_2(x)=\dfrac{x}{q^2}+\dfrac{1}{q} \right\},$$ then  
 $f_{U}(U_q,U_q)$ contains an interior. 
\end{Corollary}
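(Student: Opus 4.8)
The plan is to exhibit a homogeneous self-similar subset of $U_q$ to which Theorem \ref{Main2} applies directly, following the strategy described in the preceding remark: first locate a self-similar subset of the fractal set, then invoke Theorem \ref{Main2}. I begin by reading off the attractor $K_q$ arithmetically. Since $g_1(x)=x/q^2+1/q^2$ sends $\sum_{n\ge1}a_nq^{-n}$ to $q^{-2}+\sum_{n\ge3}a_{n-2}q^{-n}$, the map $g_1$ prepends the digit block $01$; likewise $g_2(x)=x/q^2+1/q$ prepends the block $10$. Hence $K_q$ is exactly the set of numbers whose $q$-expansion $(a_n)$ is an infinite concatenation of the blocks $01$ and $10$. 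The endpoints of its convex hull are the fixed points $a=g_1(a)=1/(q^2-1)$ and $b=g_2(b)=q/(q^2-1)$, so $b-a=1/(q+1)$.

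The heart of the argument is the inclusion $K_q\subseteq U_q$ for $q\in(q_*,2)$, whence $f_U(K_q,K_q)\subseteq f_U(U_q,U_q)$. I would use the lexicographic characterization of univoque sequences: writing $(\alpha_i)$ for the quasi-greedy expansion of $1$ in base $q$ and $\overline{a}=1-a$, a sequence $(a_n)$ is univoque iff $\sigma^n(a)\prec(\alpha_i)$ whenever $a_n=0$ and $\sigma^n(\overline{a})\prec(\alpha_i)$ whenever $a_n=1$. For a concatenation of $01,10$ one enumerates the possible tails: a $0$ occurs either as the first symbol of $01$ (giving a tail $1$ followed by further blocks) or as the second symbol of $10$ (a pure block concatenation), and symmetrically for $1$. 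Maximizing the relevant tail in the first condition and the reflected tail in the second, both are governed by the single extremal sequence $1(10)^\infty=1101010\cdots$, so the inclusion reduces to checking $1(10)^\infty\prec(\alpha_i)$. The key computation is that $1(10)^\infty$ has value $1/q+1/(q^2-1)$ in base $q$, which equals $1$ precisely when $q^3-q^2-2q+1=0$, i.e. at $q=q_*$; since $1(10)^\infty$ is self-admissible it is then the quasi-greedy expansion of $1$ at $q_*$. Because $q\mapsto(\alpha_i)$ is strictly increasing, for every $q\in(q_*,2)$ one gets $1(10)^\infty\prec(\alpha_i)$ strictly, which gives $K_q\subseteq U_q$.

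It then remains to check that the hypothesis of Theorem \ref{Main2}, applied with $K_1=K_2=K_q$, collapses to the stated inequality. Here $n=m=2$ and $c=a,\ d=b$. A short computation gives $g_1(b)-g_2(a)=(2-q^2)/(q^2(q+1))<0$ for $q>\sqrt2$, so $D_1=D_2=\{1\}$ and $\kappa_1=\kappa_2=(q^2-2)/(q^2(q+1))$. Dividing, $\kappa_1/(d-c)=(q^2-2)/q^2=1-2\lambda$ and $\lambda(b-a)/\kappa_2=1/(q^2-2)=\lambda/(1-2\lambda)$, so the admissible range for $|\partial_yf/\partial_xf|$ in Theorem \ref{Main2} is exactly $(1-2\lambda,\ \lambda/(1-2\lambda))$; this interval is nonempty precisely because $q<2$. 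Theorem \ref{Main2} then shows $f_U(K_q,K_q)$ contains an interior, and by the inclusion above so does $f_U(U_q,U_q)$.

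The routine part is the final paragraph of algebra; the main obstacle is the middle step, namely verifying that $1(10)^\infty$ is the unique extremal tail controlling \emph{both} univoqueness conditions and that its value-$1$ threshold is exactly $q_*$. Care is needed to confirm that $1(10)^\infty$ is genuinely realized as a tail of points of $K_q$, that it dominates all competing tails lexicographically, and that the monotonicity of the quasi-greedy expansion upgrades the equality at $q_*$ to the strict inequality required for univoqueness throughout the open interval $(q_*,2)$.
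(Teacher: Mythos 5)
Your proposal is correct and takes essentially the same route as the paper: first establish $K_q\subset U_q$ via the lexicographic characterization of unique expansions (the paper's Lemma 2.5, resting on the quasi-greedy expansion of $1$ at $q_*$ being $11(01)^\infty=1(10)^\infty$), then translate the hypotheses of Theorem \ref{Main2} for $K_1=K_2=K_q$ into the stated inequality $1-2\lambda<\left\vert \partial_y f/\partial_x f\right\vert<\lambda/(1-2\lambda)$. If anything, your write-up is more careful than the paper's: you compute $\kappa_1=\kappa_2=(q^2-2)/(q^2(q+1))$ explicitly rather than asserting the translation, and you use strict monotonicity of the quasi-greedy expansion in $q$ to get the required \emph{strict} lexicographic inequality for $q>q_*$, whereas the paper's claim $\{(01),(10)\}^\infty\subset\widetilde{U}_{q_*}$ is imprecise at $q_*$ itself, where equality of the extremal tail with the quasi-greedy expansion occurs.
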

\begin{Corollary}\label{Cor1}
If  $q\in(q_{*},2)$, then $$ U_q^2+ U_q^2, U_q^2- U_q^2, U_q\cdot U_q, U_q\div U_q$$ have interiors. 
\end{Corollary}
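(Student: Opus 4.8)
The plan is to deduce Corollary \ref{Cor1} directly from Corollary \ref{CCC} by checking, for each of the four functions
$$
f(x,y)=x^2+y^2,\quad x^2-y^2,\quad xy,\quad \tfrac{x}{y},
$$
that the ratio hypothesis
$$
1-2\lambda < \left\vert \frac{\partial_y f|_{(x_0,y_0)}}{\partial_x f|_{(x_0,y_0)}}\right\vert < \frac{\lambda}{1-2\lambda},\qquad \lambda=q^{-2},
$$
is satisfied at a conveniently chosen point $(x_0,y_0)\in K_q\times K_q$. The first step is to locate the convex hull of $K_q$: solving the fixed-point equations $g_1(c)=c$ and $g_2(d)=d$ gives the convex hull $[c,d]=\left[\frac{1}{q^2-1},\,\frac{q}{q^2-1}\right]$, so that $c$ and $d$ both lie in $K_q$ and $d/c=q$.

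Next I would compute the partial-derivative ratios. For $f=x^2\pm y^2$ one has $\left\vert \partial_y f/\partial_x f\right\vert=|y|/|x|$, while for $f=xy$ and $f=x/y$ one has $\left\vert \partial_y f/\partial_x f\right\vert=|x|/|y|$. Since $c,d>0$, choosing $(x_0,y_0)=(d,c)$ in the first two cases and $(x_0,y_0)=(c,d)$ in the last two makes the relevant ratio equal to $c/d=1/q$ in every case; moreover $\partial_x f\neq 0$ at these points, and for $f=x/y$ the choice $y_0=c>0$ keeps us away from the singular locus, so each $f$ is smooth on a small open neighbourhood $U$ of $(x_0,y_0)$, with $(x_0,y_0)\in(K_q\times K_q)\cap U$.

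The verification then reduces to the single requirement $\tfrac1q\in\left(1-2\lambda,\ \tfrac{\lambda}{1-2\lambda}\right)$ with $\lambda=q^{-2}$. Substituting $\lambda=q^{-2}$ rewrites the two endpoint conditions as $\frac{q^2-2}{q^2}<\frac1q$ and $\frac1q<\frac{1}{q^2-2}$, each of which simplifies to $q^2-q-2=(q-2)(q+1)<0$, i.e. $q<2$; the right-hand inequality additionally needs $q^2-2>0$, which holds because $q>q_*>\sqrt2$. Hence for every $q\in(q_*,2)$ the hypothesis of Corollary \ref{CCC} is met, so $f_U(U_q,U_q)$ contains an interval for each of the four functions.

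Finally, since $f_U(U_q,U_q)=\{f(x,y):(x,y)\in(U_q\times U_q)\cap U\}\subseteq\{f(x,y):x,y\in U_q\}$, and the latter set is precisely $U_q^2+U_q^2$, $U_q^2-U_q^2$, $U_q\cdot U_q$, or $U_q\div U_q$ respectively, the interval produced by Corollary \ref{CCC} is contained in the corresponding arithmetic set, which therefore has nonempty interior. The substantive content lies entirely in Corollary \ref{CCC} (and, within it, the inclusion $K_q\subseteq U_q$, which is what actually forces the lower bound $q>q_*$ rather than the weaker $q>\sqrt2$ coming from the ratio computation alone); here the only points requiring genuine care are keeping $U$ disjoint from $\{y=0\}$ in the division case and the elementary observation that $q>q_*$ guarantees $q^2-2>0$.
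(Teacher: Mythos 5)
Your proposal is correct and follows essentially the same route as the paper: both deduce the result from Corollary \ref{CCC} by evaluating the derivative ratio at the corner point(s) of $K_q\times K_q$ built from the fixed points $\frac{1}{q^2-1}$ and $\frac{q}{q^2-1}$, and both reduce the required inequality to $(q-2)(q+1)<0$ together with $q^2-2>0$ (your ratio $1/q$ versus the paper's reciprocal ratio $q$ is the same check). Your write-up merely makes explicit the algebra the paper dismisses as ``easy to check,'' along with the $y\neq 0$ caveat for division.
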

It is natural to consider the dimension of $f_U(U_q, U_q)$. The following results are indeed some  corollaries  of B{\'a}r{\'a}ny \cite{BABA1}, Peres and Shmerkin\cite{PS}.
\setcounter{Proposition}{5}
\begin{Proposition}\label{pro}
For any $1<q<2$, 
$$\dim_{H}(U_q\cdot U_q)=\dim_{H}(U_q\div U_q)=\dim_{H}(U_q^2\pm U_q^2)=\min\{2\dim_{H}(U_q),1\}.$$
For two different bases  $q_1,q_2\in(1,2)$ with the property $$\dfrac{\log q_1}{\log q_2}\notin \mathbb{Q},$$ then 
$$\dim_{H}(U_{q_1}+ U_{q_2})=\min\{\dim_{H}(U_{q_1})+\dim_{H}(U_{q_2}), 1\}.$$ 
\end{Proposition}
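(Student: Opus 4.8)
The plan is to sandwich each quantity between a lower and an upper bound that coincide with $\min\{2\dim_H(U_q),1\}$ (resp.\ $\min\{\dim_H(U_{q_1})+\dim_H(U_{q_2}),1\}$), and to read off both bounds from the cited theorems once $U_q$ has been replaced by a genuinely self-similar object. The upper bound is soft in every case. Each of the maps $f(x,y)=xy,\ x/y,\ x^2\pm y^2$ is locally Lipschitz on a neighbourhood of $U_q\times U_q$ that avoids the bad locus ($y=0$ for the quotient, the origin for the quadratics; these are at most countable and so irrelevant for dimension), and Lipschitz maps do not raise Hausdorff dimension. Since the image lies in $\R$, one gets $\dim_H f_U(U_q,U_q)\le\min\{\dim_H(U_q\times U_q),1\}$. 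Combining the product estimate $\dim_H(U_q\times U_q)\le\dim_H U_q+\dim_P U_q$ with the regularity $\dim_H U_q=\dim_P U_q$ for the univoque set (it is coded by a subshift, so its Hausdorff and box/packing dimensions agree), the upper bound becomes $\min\{2\dim_H U_q,1\}$, and the same computation yields the upper bound in the two-base case.

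For the lower bounds I would first pass to a self-similar set. Because $U_q$ is coded by a subshift, for every $\varepsilon>0$ it contains a sub-self-similar set $K\subset U_q$ with $\dim_H K>\dim_H U_q-\varepsilon$; by \cite[Theorem 1.1]{Farkas} and \cite[Proposition 6]{PS} (cf.\ the remarks following Theorem~\ref{Main2}) one may even take $K$ homogeneous with contraction ratio a power of $q^{-1}$. By monotonicity $\dim_H f_U(U_q,U_q)\ge\dim_H f(K\times K)$, so it suffices to prove the lower bound with $U_q$ replaced by $K$ and then let $\dim_H K\uparrow\dim_H U_q$.

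The single-base statements are nonlinear, and this is precisely what rescues them: although the two factors of $K\times K$ share the common ratio $\lambda=q^{-2}$, the maps $xy$, $x/y$, $x^2\pm y^2$ are not affine, and their gradients $(y,x)$, $(1/y,-x/y^2)$, $(2x,\pm2y)$ point in directions that genuinely vary over $K\times K$. Hence no single resonant projection can control the image, and B{\'a}r{\'a}ny's theorem \cite{BABA1} on the dimension of smooth nonlinear images of products of self-similar sets applies, giving $\dim_H f(K\times K)=\min\{\dim_H(K\times K),1\}=\min\{2\dim_H K,1\}$. Letting $\dim_H K\uparrow\dim_H U_q$ furnishes the matching lower bound for $U_q\cdot U_q$, $U_q\div U_q$ and $U_q^2\pm U_q^2$. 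For the two-base sum the map is linear, so nonlinearity is unavailable and the hypothesis $\log q_1/\log q_2\notin\Q$ does the work instead: choosing homogeneous approximants $K_i\subset U_{q_i}$ with ratios $q_i^{-k_i}$, the ratio $(k_1\log q_1)/(k_2\log q_2)$ of the logarithmic contraction rates is again irrational, so the sets are non-resonant and the theorem of Peres and Shmerkin \cite{PS} yields $\dim_H(K_1+K_2)=\min\{\dim_H K_1+\dim_H K_2,1\}$; letting $\dim_H K_i\uparrow\dim_H U_{q_i}$ completes the lower bound.

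The main obstacle is the reduction from the univoque set to a self-similar object to which \cite{BABA1} and \cite{PS} literally apply. One must verify that $U_q$ carries enough self-similar structure — through its subshift coding and the approximation results \cite[Theorem 1.1]{Farkas}, \cite[Proposition 6]{PS} — and that the regularity $\dim_H U_q=\dim_P U_q$ holds, so that both the product formula used for the upper bound and the non-resonance (or non-degeneracy) hypotheses used for the lower bound are legitimate. Once this reduction is secured, the two cited theorems close the argument immediately.
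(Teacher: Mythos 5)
Your overall architecture matches the paper's: a soft Lipschitz/product upper bound, then a lower bound obtained by approximating $U_q$ from inside by homogeneous self-similar sets and invoking B\'ar\'any's theorem \cite{BABA1} (the paper's Theorem \ref{BBB}) for the nonlinear maps $xy,\ x/y,\ x^2\pm y^2$, and Peres--Shmerkin \cite{PS} for the two-base sum. However, there is a genuine gap at precisely the step you flag as ``the main obstacle'' and then assert rather than prove: the claim that, \emph{because $U_q$ is coded by a subshift}, it contains for every $\varepsilon>0$ a homogeneous self-similar set $K$ with $\dim_H K>\dim_H U_q-\varepsilon$. Subshift coding alone does not yield this. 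A general subshift need not contain any nonempty subshift of finite type of positive entropy (for instance, a minimal subshift of positive entropy has no periodic points, hence contains no nonempty SFT at all), and Farkas's theorem \cite{Farkas} applies to attractors of graph-directed IFSs, i.e.\ to projections of SFTs, not to projections of arbitrary subshifts. So the inference ``coded by a subshift $\Rightarrow$ self-similar subsets of nearly full dimension'' is invalid as stated, and the two citations you give cannot start from $U_q$ itself.

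The paper closes exactly this hole with a theorem specific to univoque sets: by Alcaraz Barrera--Baker--Kong \cite{RSK}, the coding space $\widetilde{U_q}$ contains, for every $\varepsilon>0$, a topologically \emph{transitive} subshift of finite type $\widetilde{U_{q_0}}$ whose projection satisfies $\dim_H(\pi_q(\widetilde{U_{q_0}}))>\dim_H(U_q)-\varepsilon$; this projection is an irreducible graph-directed self-similar set (transitivity ensuring all components have equal dimension), and only then do \cite[Theorem 1.1]{Farkas} and \cite[Remark 1.2]{Farkas} produce the homogeneous self-similar subsets your argument needs. The same caveat attaches to your upper bound: the regularity $\dim_H U_q=\dim_P U_q$ is again a fact about these particular subshifts (via entropy results of the \cite{RSK} type), not a consequence of subshift coding per se, though the paper is equally terse on this point. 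The rest of your proposal is sound; in particular, your observation that approximants with ratios $q_i^{-k_i}$ still satisfy the Peres--Shmerkin non-resonance hypothesis (a nonzero rational multiple of an irrational is irrational) is a legitimate alternative to the paper's use of \cite[Remark 1.2]{Farkas} to obtain ratios exactly $1/q_i$.
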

The paper is arranged as follows. In section 2, we give the proofs of main results. In section 3, we give some remarks. 

\section{Proof of Main results}
In this section, we shall give the proofs of the main theorems of this paper. 
\subsection{Proof of Theorem \ref{Main2}}
Before, we prove Theorem \ref{Main2}, we recall some definitions. For simplicity, we only  introduce the definitions for $K_1$.  For any $(i_{1},\cdots ,i_{k})\in \{1,2,\cdots, n\}^{k}$, we call $
f_{i_{1},\cdots ,i_{k}}([a,b])=(f_{i_{1}}\circ \cdots \circ f_{i_{k}})([a,b])$ a
basic interval of rank $k,$ which has length $\lambda^{k}(b-a)$.  In  what follows, we use the following notation
$f_{i_{1}}\circ \cdots \circ f_{i_{k}}=f_{i_{1},\cdots ,i_{k}}.$
Denote by $H_{k}$ the collection of all these basic intervals
of rank $k$ with respect to $\{f_i\}_{i=1}^{n}$. Let $J\in H_{k}$, define $\widetilde{J}=\cup _{i=1}^{n}I_{k+1,i}$
, where $I_{k+1,i}\in H_{k+1}$ and $I_{k+1,i}\subset J$ for $i=1,2,\cdots n$. 
Let $%
[A,B]\subset \lbrack a,b]$, where $A$ and $B$ are the left and right
endpoints of some basic intervals in $H_{k}$ for some $k\geq 1$,
respectively. $A$ and $B$ may not be in the same basic interval. Let $F_{k}$
be the collection of all the basic intervals in $[A,B]$ with length $%
\lambda^{k}(b-a),k\geq k_{0}$ for some $k_{0}\in \mathbb{N}^{+}$, i.e. the union of
all the elements of $F_{k}$ is denoted by $G_{k}=\cup _{i=1}^{t_{k}}I_{k,i}$%
, where $t_{k}\in \mathbb{N}^{+}$, $I_{k,i}\in H_{k}$ and $I_{k,i}\subset
\lbrack A,B]$. Clearly, by the definition of $G_{k}$, it follows that $%
G_{k+1}\subset G_{k}$ for any $k\geq k_{0}.$ Similarly, we can define the basic intervals for $K_2$. Let  $M$ and
$N$ are the left and right endpoints of some basic intervals in $H_{k}^{'}$ with respect to $K_2$.
Denote by $G_{k}^{\prime }$ the union of all the basic intervals with length
$\lambda^{k}(d-c)$ in the interval $[M,N]$, i.e. $G_{k}^{\prime }=\cup
_{j=1}^{t_{k}^{\prime }}J_{k,j}^{\prime }$, where $t_{k}^{\prime }\in \mathbb{N}^{+}$,
$J_{k,j}^{\prime }\in H_{k}^{\prime }$ and $J_{k,j}^{\prime }\subset \lbrack M,N]$.
The following lemma is crucial to our analysis. 

\begin{Lemma}\label{key1} 
Let $f:U\rightarrow \mathbb{R}$ be a continuous function$.$
Suppose $A$ and $B$ ($M$ and $N$) are the left and right endpoints of some
basic intervals in $H_{k_{0}}$ ($H_{k_{0}}^{\prime }$) for some $k_{0}\geq 1$ respectively such that
$[A,B]\times \lbrack M,N]\subset U.$ Then $K_1\cap \lbrack A,B]=\cap _{k={k_{0}%
}}^{\infty }G_{k}$, and $K_2\cap \lbrack M,N]=\cap _{k={k_{0}}}^{\infty
}G_{k}^{\prime }$. Moreover, if for any $k\geq k_{0}$ and any two basic
intervals $I\subset G_{k}$, $J\subset G_{k}^{\prime }$ such that
\begin{equation*}
f(I,J)=f(\widetilde{I},\widetilde{J}),
\end{equation*}%
then $f(K_1\cap \lbrack A,B],K_2\cap \lbrack M,N])=f(G_{k_{0}},G_{k_{0}}^{\prime
}).$
\end{Lemma}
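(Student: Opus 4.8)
The plan is to prove the stated equality by establishing, in order, the two identities of the form ``$\,\cdot\,=\bigcap_k G_k$'' and then the substantive identity $f(K_1\cap[A,B],K_2\cap[M,N])=f(G_{k_0},G_{k_0}')$ by a double inclusion. For the first identities, I would note that the normalization $f_1(a)=a$ and $f_n(b)=b$ forces $A$ and $B$ to be endpoints of basic intervals of \emph{every} rank $k\ge k_0$ (apply $f_1$ or $f_n$ repeatedly to the word realizing $A$ or $B$); hence at each rank $k$ every interval in $H_k$ lies either inside $[A,B]$ or outside its interior, so a point of $K_1$ in $[A,B]$ has all its containing basic intervals inside $[A,B]$, i.e. inside $G_k$. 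Combined with $K_1=\bigcap_k\bigcup_{I\in H_k}I$ and the nesting $G_{k+1}\subset G_k$, this gives $K_1\cap[A,B]=\bigcap_{k\ge k_0}G_k$, and symmetrically for $K_2$.

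For the main identity, the inclusion $f(K_1\cap[A,B],K_2\cap[M,N])\subset f(G_{k_0},G_{k_0}')$ is immediate since $K_1\cap[A,B]\subset G_{k_0}$ and $K_2\cap[M,N]\subset G_{k_0}'$. The content is the reverse inclusion. Fix $z\in f(G_{k_0},G_{k_0}')$, say $z=f(p_{k_0},q_{k_0})$ with $p_{k_0}$ in a rank-$k_0$ basic interval $I_{k_0}\subset G_{k_0}$ and $q_{k_0}$ in $J_{k_0}\subset G_{k_0}'$. I would then construct, by induction on $k$, nested basic intervals $I_{k_0}\supset I_{k_0+1}\supset\cdots$ and $J_{k_0}\supset J_{k_0+1}\supset\cdots$ together with points $p_k\in I_k\subset G_k$, $q_k\in J_k\subset G_k'$ satisfying $f(p_k,q_k)=z$ throughout. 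The inductive step is exactly where the hypothesis enters: since $I_k\subset G_k$ and $J_k\subset G_k'$, the assumption gives $f(I_k,J_k)=f(\widetilde{I_k},\widetilde{J_k})$, and as $z=f(p_k,q_k)\in f(I_k,J_k)$ we may select $p_{k+1}\in\widetilde{I_k}$, $q_{k+1}\in\widetilde{J_k}$ with $f(p_{k+1},q_{k+1})=z$; the child of $I_k$ containing $p_{k+1}$ is a rank-$(k+1)$ basic interval inside $[A,B]$, hence inside $G_{k+1}$, and similarly for $q_{k+1}$.

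To finish, I would set $x^{\ast}=\bigcap_k I_k$ and $y^{\ast}=\bigcap_k J_k$, which are well-defined single points because the lengths $\lambda^k(b-a)$ and $\lambda^k(d-c)$ shrink to $0$; by the first part $x^{\ast}\in K_1\cap[A,B]$ and $y^{\ast}\in K_2\cap[M,N]$, and in particular $(x^{\ast},y^{\ast})\in[A,B]\times[M,N]\subset U$. Since $p_k,x^{\ast}\in I_k$ and $q_k,y^{\ast}\in J_k$ we have $p_k\to x^{\ast}$ and $q_k\to y^{\ast}$; continuity of $f$ then yields $f(x^{\ast},y^{\ast})=\lim_k f(p_k,q_k)=z$, so $z\in f(K_1\cap[A,B],K_2\cap[M,N])$.

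I expect the main obstacle to be the bookkeeping in the inductive pull-down: one must keep the single value $z$ fixed while descending through the ranks and simultaneously guarantee that the chosen children stay inside $G_{k+1}$ and $G_{k+1}'$ (so that the hypothesis remains applicable at the next rank) and that the selected intervals remain nested. The hypothesis $[A,B]\times[M,N]\subset U$ is essential here, as it ensures $f$ is defined at every pair $(p_k,q_k)$ and at the limit $(x^{\ast},y^{\ast})$; the only analytic input at the end is the continuity of $f$ together with the shrinking diameters of the nested basic intervals.
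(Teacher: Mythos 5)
Your central construction---fixing a value $z\in f(G_{k_0},G_{k_0}')$ and pulling it down through the ranks by induction, using the hypothesis $f(I_k,J_k)=f(\widetilde{I_k},\widetilde{J_k})$ at each step, then passing to the limit via the shrinking nested intervals and continuity---is correct, and it is essentially the paper's argument unpacked to the level of a single point. The paper instead works with set identities: it shows $f(G_k,G_k')=f(G_{k+1},G_{k+1}')$ for every $k$ and then invokes, without proof, the commutation $f\bigl(\cap_k G_k,\cap_k G_k'\bigr)=\cap_k f(G_k,G_k')$ for nested compact sets; your induction proves both ingredients at once, which has the merit of making the compactness input explicit.

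There is, however, a genuine flaw in your treatment of the preliminary identity $K_1\cap[A,B]=\cap_{k\ge k_0}G_k$. Your endpoint-extension observation (that $A$ and $B$ remain endpoints of basic intervals of every rank $k\ge k_0$, via $f_1(a)=a$ and $f_n(b)=b$) is correct and useful, but the inference you draw from it---``hence at each rank $k$ every interval in $H_k$ lies either inside $[A,B]$ or outside its interior''---is a non sequitur, and it is false in the paper's generality: the IFS is allowed to have overlapping pieces (only gaps enter the definition of $D_1$, and the proof of Theorem 2.2 explicitly handles the case $f_j(b)-f_{j+1}(a)\ge 0$). For example, with $f_1(x)=x/2$, $f_2(x)=x/2+1/4$, $f_3(x)=x/2+1/2$ on $[0,1]$ and $[A,B]=[1/2,1]=f_3([0,1])$, the rank-one interval $f_2([0,1])=[1/4,3/4]$ contains $A=1/2$ in its interior. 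Consequently your stronger claim that a point of $K_1\cap[A,B]$ has \emph{all} of its containing basic intervals inside $[A,B]$ also fails (the point $0.6$ lies in $[1/4,3/4]\not\subset[A,B]$). Your inference would be valid if the rank-$k$ intervals had pairwise disjoint interiors, but that is not assumed.

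The statement you need is weaker and still true: every $x\in K_1\cap[A,B]$ lies in \emph{some} rank-$k$ basic interval contained in $[A,B]$. To repair the step, take any coding interval $I\in H_k$ with $x\in I$; if $I\not\subset[A,B]$, then $I$ has an endpoint outside $[A,B]$, and since $|I|=\lambda^k(b-a)$ this forces $0\le x-A\le\lambda^k(b-a)$ or $0\le B-x\le\lambda^k(b-a)$, so $x$ lies in the rank-$k$ interval with left endpoint $A$ (resp.\ right endpoint $B$), which is contained in $[A,B]$ precisely by your endpoint-extension observation (here one uses $B-A\ge\lambda^{k_0}(b-a)\ge\lambda^k(b-a)$). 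This repair restores both $K_1\cap[A,B]\subset\cap_k G_k$ and the ``easy'' inclusion $f(K_1\cap[A,B],K_2\cap[M,N])\subset f(G_{k_0},G_{k_0}')$, which quietly rests on the same claim; the inductive pull-down and the limit argument, which do not use the flawed direction, then go through as you wrote them.
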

\begin{proof}
By the construction of $G_{k}$ ($G_{k}^{\prime }$), i.e. $G_{k+1}\subset
G_{k}$ ($G_{k+1}^{\prime }\subset G_{k}^{\prime }$) for any $k\geq k_{0}$,
it follows that
\begin{equation*}
K_1\cap \lbrack A,B]=\cap _{k=k_{0}}^{\infty }G_{k}\text{ and }K_2\cap \lbrack
M,N]=\cap _{k=k_{0}}^{\infty }G_{k}^{\prime }.
\end{equation*}%
The continuity of $f$ yields that
\begin{equation*}
f(K_1\cap \lbrack A,B],K_2\cap \lbrack M,N])=\cap _{k=k_{0}}^{\infty
}f(G_{k},G_{k}^{\prime }).
\end{equation*}%
In terms of the relation $G_{k+1}=\widetilde{G_{k}}$, $G_{k+1}^{\prime }=%
\widetilde{G_{k}^{\prime }}$ and the condition in the lemma, it follows that
\begin{eqnarray*}
f(G_{k},G_{k}^{\prime }) &=&\cup _{1\leq i\leq t_{k}}\cup _{1\leq j\leq
t_{k}^{\prime }}f(I_{k,i},J_{k,j}) \\
&=&\cup _{1\leq i\leq t_{k}}\cup _{1\leq j\leq t_{k}^{\prime }}f(\widetilde{%
I_{k,i}},\widetilde{J_{k,j}}) \\
&=&f(\cup _{1\leq i\leq t_{k}}\widetilde{I_{k,i}},\cup _{1\leq j\leq
t_{k}^{\prime }}\widetilde{J_{k,j}}) \\
&=&f(G_{k+1},G_{k+1}^{\prime }).
\end{eqnarray*}%
Therefore, $f(K_1\cap \lbrack A,B],K_2\cap \lbrack
M,N])=f(G_{k_{0}},G_{k_{0}}^{\prime }).$
\end{proof}
\setcounter{Theorem}{1}
\begin{Theorem}\label{thm1}
Suppose that  $\partial _{x}f, \partial _{y}f$ are   continuous.
If there is some $(x_0,y_0)\in (K_1\times K_2)\cap U$ such that $$\dfrac{\kappa_1}{d-c}<\left\vert \dfrac{\partial_y f|_{(x_0,y_0)}}{\partial_x f|_{(x_0,y_0)}}\right\vert <\dfrac{\lambda(b-a)}{\kappa_2},\partial_y f|_{(x_0,y_0)}>0, \partial_x f|_{(x_0,y_0)}>0,$$
then $f_U(K_1,K_2)$ contains some interiors. 
\end{Theorem}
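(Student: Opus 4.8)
The plan is to reduce everything to Lemma~\ref{key1}. If I can exhibit a single basic rectangle $[A,B]\times[M,N]\subset U$ whose corners are endpoints of basic intervals in $H_{k_0}$ and $H_{k_0}'$, and on which the hypothesis $f(I,J)=f(\widetilde I,\widetilde J)$ holds for every pair of basic subintervals at every rank $k\ge k_0$, then Lemma~\ref{key1} collapses the fractal image to $f(G_{k_0},G_{k_0}')$. I will choose $[A,B]$ and $[M,N]$ to be \emph{single} basic intervals $I^*\in H_{k_0}$ and $J^*\in H_{k_0}'$, so that $G_{k_0}=I^*$ and $G_{k_0}'=J^*$ are solid intervals. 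Then $f(K_1\cap I^*,K_2\cap J^*)=f(I^*,J^*)$ is the continuous image of a connected rectangle, hence a genuine interval, and it is nondegenerate because $\partial_x f>0$. This interval lies in $f_U(K_1,K_2)$, which is what we want.

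Since $\partial_x f,\partial_y f$ are continuous and strictly positive at $(x_0,y_0)$ and the two displayed inequalities are strict, all three conditions persist on a neighborhood $V$ of $(x_0,y_0)$: there is $\varepsilon>0$ with $\partial_x f,\partial_y f>0$ and $\frac{\kappa_1}{d-c}+\varepsilon<\frac{\partial_y f(p)}{\partial_x f(q)}<\frac{\lambda(b-a)}{\kappa_2}-\varepsilon$ for all $p,q\in V$. Because $x_0\in K_1$ and $y_0\in K_2$ lie in a rank-$k$ basic interval for every $k$, I take $k_0$ so large that the basic rectangle $I^*\times J^*$ containing $(x_0,y_0)$ satisfies $I^*\times J^*\subset V\cap U$. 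On $V$ the map $f$ is increasing in each variable, so the image of any sub-rectangle $[p_1,p_2]\times[r_1,r_2]\subset V$ is the interval $[f(p_1,r_1),f(p_2,r_2)]$ cut out by its lower-left and upper-right corners.

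The heart of the matter is verifying $f(I,J)=f(\widetilde I,\widetilde J)$ for every pair of rank-$k$ basic intervals $I\subset I^*$, $J\subset J^*$, which I carry out in two steps, one for each inequality. Writing $I_1,\dots,I_n$ and $J_1,\dots,J_m$ for the rank-$(k+1)$ children, the gaps between consecutive children are at most $\lambda^k\kappa_1$ in the $x$-direction and at most $\lambda^k\kappa_2$ in the $y$-direction, each child has width $\lambda^{k+1}(b-a)$, and $J$ has full height $\lambda^k(d-c)$. In Step~A, fixing a column $I_i$, I prove $\bigcup_j f(I_i,J_j)=f(I_i,J)$ by checking that consecutive image intervals $f(I_i,J_j)$ and $f(I_i,J_{j+1})$ overlap; by the mean value theorem this amounts to $\partial_x f(\xi)\,\lambda^{k+1}(b-a)\ge\partial_y f(\eta)\,\lambda^k\kappa_2$ for some $\xi,\eta\in V$, which follows from $\frac{\partial_y f(\eta)}{\partial_x f(\xi)}<\frac{\lambda(b-a)}{\kappa_2}$. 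In Step~B, I treat the full columns and prove $\bigcup_i f(I_i,J)=f(I,J)$, where the overlap of $f(I_i,J)$ and $f(I_{i+1},J)$ reduces, again by the mean value theorem, to $\partial_y f(\eta)\,\lambda^k(d-c)\ge\partial_x f(\xi)\,\lambda^k\kappa_1$, i.e. $\frac{\partial_y f(\eta)}{\partial_x f(\xi)}>\frac{\kappa_1}{d-c}$. Combining, $f(\widetilde I,\widetilde J)=\bigcup_i\bigcup_j f(I_i,J_j)=\bigcup_i f(I_i,J)=f(I,J)$, where I also use $f_1(a)=a$, $f_n(b)=b$, $g_1(c)=c$, $g_m(d)=d$ to match the endpoints of $\widetilde I$ ($\widetilde J$) with those of $I$ ($J$).

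I expect the main obstacle to be the passage from the clean linearized overlap condition to a rigorous one: the two partials in each inequality are evaluated at different points, so I must absorb the mismatch. This is precisely the role of the strict inequalities and the $\varepsilon$-slack on $V$: for $k_0$ large the whole rectangle $I^*\times J^*$ lies in $V$, hence every pair $(\xi,\eta)$ arising above stays in $V$ and the ratio $\partial_y f(\eta)/\partial_x f(\xi)$ remains pinned strictly between $\frac{\kappa_1}{d-c}$ and $\frac{\lambda(b-a)}{\kappa_2}$. The remaining bookkeeping is routine — only positive gaps need attention, touching or overlapping children give overlapping images automatically, and the scaling of gaps by $\lambda^k$ against widths $\lambda^{k+1}(b-a)$ and heights $\lambda^k(d-c)$ produces exactly the asymmetric bounds $\frac{\kappa_1}{d-c}$ and $\frac{\lambda(b-a)}{\kappa_2}$. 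With the hypothesis of Lemma~\ref{key1} in hand, it gives $f(K_1\cap I^*,K_2\cap J^*)=f(I^*,J^*)$, a nondegenerate interval contained in $f_U(K_1,K_2)$.
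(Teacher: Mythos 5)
Your proposal is correct and follows essentially the same route as the paper's own proof: both localize to a neighborhood $V$ of $(x_0,y_0)$ where the partials stay positive and the ratio bounds persist, then verify the hypothesis of Lemma~\ref{key1} by the same two overlap estimates (consecutive children in the $y$-direction within a fixed column via the upper bound $\lambda(b-a)/\kappa_2$, then consecutive columns via the lower bound $\kappa_1/(d-c)$). Your mean-value-theorem treatment of the different evaluation points is in fact a bit more careful than the paper's $o(\lambda^p)$ expansion, but the decomposition, the key lemma, and the conclusion are identical.
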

\begin{proof}
Since $\partial _{x}f, \partial _{y}f$ are   continuous, and $\partial_y f|_{(x_0,y_0)}>0, \partial_x f|_{(x_0,y_0)}>0$, it follows that there is some neighbourhood of $(x_0,y_0)$, denoted by $V$,  such that 
$$\partial_y f(x,y)>0, \partial_x f(x,y)>0$$
for any $(x,y)\in V.$
Let $I=f_{i_1\cdots i_p}([a,b])$ and $J=g_{j_1\cdots j_p}([c,d])$ be two basic intervals of $H_k$ and $H_k^{\prime}$, respectively.  We assume that 
$I\times J\subset V$. 
By the definition $\widetilde{I}$ and $\widetilde{J}$, we have 
$$\widetilde{I}=\cup_{j=1}^{n}f_{i_1\cdots i_pj}([a,b]), \widetilde{J}=\cup_{l=1}^{m}g_{j_1\cdots j_pl}([c,d]).$$
Therefore, $$f(\widetilde{I},\widetilde{J})=\cup_{j=1}^{n}\cup_{l=1}^{m}f(f_{i_1\cdots i_pj}([a,b]),g_{j_1\cdots j_pl}([c,d])).$$
For any $1\leq j\leq n$, we shall prove that 
$$\cup_{l=1}^{m}f(f_{i_1\cdots i_pj}([a,b]),g_{j_1\cdots j_pl}([c,d]))$$ is an interval. In fact, by the conditions $\partial_y f|_{(x_0,y_0)}>0, \partial_x f|_{(x_0,y_0)}>0$,   it remains to prove that 
$$f(f_{i_1\cdots i_pj}(b),g_{j_1\cdots j_pl}(d))-f(f_{i_1\cdots i_pj}(a),g_{j_1\cdots j_pl+1}(c))\geq 0.$$
Note that 
\begin{eqnarray*}
&& f(f_{i_1\cdots i_pj}(b),g_{j_1\cdots j_pl}(d))-f(f_{i_1\cdots i_pj}(a),g_{j_1\cdots j_pl+1}(c))\\
&=&(f_j(b)-f_j(a)) \lambda^{p}\partial_x f+ (g_l(d)-g_{l+1}(c))\lambda^{p}\partial_y f +o(\lambda^{p})] \\
&=&\lambda^{p}\left[ (b-a)\lambda  \partial_x f(x,y)+(g_l(d)-g_{l+1}(c))\partial_y f(x,y) +o(1)\right]
\end{eqnarray*}
for any $(x,y)\in I\times J\subset V$. 
By the condition $\dfrac{\partial_y f|_{(x_0,y_0)}}{\partial_x f|_{(x_0,y_0)}} <\dfrac{\lambda(b-a)}{\kappa_2}$  and the continuity  of $\partial_x f, \partial_y f$, it follows that $$f(f_{i_1\cdots i_pj}(b),g_{j_1\cdots j_pl}(d))-f(f_{i_1\cdots i_pj}(a),g_{j_1\cdots j_pl+1}(c))\geq 0.$$
Next, we want to show that 
$$\cup_{l=1}^{m}f(f_{i_1\cdots i_pj}([a,b]),g_{j_1\cdots j_pl}([c,d]))\cup 
\cup_{l=1}^{m}f(f_{i_1\cdots i_pj+1}([a,b]),g_{j_1\cdots j_pl}([c,d]))$$ is an interval for any $1\leq j\leq n-1$. 
Indeed, it suffices to prove that 
\begin{eqnarray*}
&& f(f_{i_1\cdots i_pj}(b),g_{j_1\cdots j_pm}(d))-f(f_{i_1\cdots i_pj+1}(a),g_{j_1\cdots j_p1}(c))\\
&=&(f_j(b)-f_{j+1}(a)) \lambda^{p}\partial_x f+ (g_m(d)-g_{1}(c))\lambda^{p}\partial_y f +o(\lambda^{p})] \\
&=&\lambda^{p}\left[ (f_j(b)-f_{j+1}(a))\partial_x f+(d-c)\partial_y f +o(1)\right]\\
&=&\lambda^{p}\left[ (f_j(b)-f_{j+1}(a))\partial_x f+(d-c)\partial_y f +o(1)\right]
\end{eqnarray*}
Clearly if $f_j(b)-f_{j+1}(a)\geq 0$, then 
$$\lambda^{p}\left[ (f_j(b)-f_{j+1}(a))\partial_x f+(d-c)\partial_y f +o(1)\right]\geq 0$$
 If $f_j(b)-f_{j+1}(a)<0$, i.e. $j\in D_1$, then by the condition $\dfrac{\kappa_1}{b-a}<\dfrac{\partial_y f|_{(x_0,y_0)}}{\partial_x f|_{(x_0,y_0)}}$  and the continuity  of $\partial_x f, \partial_y f$,  we also have 
 $$\lambda^{p}\left[ (f_j(b)-f_{j+1}(a))\partial_x f+(d-c)\partial_y f +o(1)\right]\geq 0$$
Therefore, we  have proved that $f(I,J)=f(\widetilde{I}, \widetilde{J})$. In terms of Lemma \ref{key1}, it follows that $f_U(K_1,K_2)$ contains some interiors.
\end{proof}
Similarly, we can prove the following result. We left it to the readers. 
\begin{Theorem}\label{thm2}
Suppose that  $\partial _{x}f, \partial _{y}f$ are   continuous.
If there is some $(x_0,y_0)\in (K_1\times K_2)\cap U$ such that $$\dfrac{\kappa_1}{d-c}<\left\vert \dfrac{\partial_y f|_{(x_0,y_0)}}{\partial_x f|_{(x_0,y_0)}}\right\vert <\dfrac{\lambda(b-a)}{\kappa_2},\partial_y f|_{(x_0,y_0)}>0, \partial_x f|_{(x_0,y_0)}<0,$$
then $f_U(K_1,K_2)$ contains some interiors. 
\end{Theorem}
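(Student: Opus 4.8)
The plan is to mirror the proof of Theorem \ref{thm1} almost verbatim, with the only genuine change being a careful bookkeeping of signs coming from the hypothesis $\partial_x f|_{(x_0,y_0)}<0$ (while $\partial_y f|_{(x_0,y_0)}>0$). First I would invoke continuity of $\partial_x f$ and $\partial_y f$ to produce a neighbourhood $V$ of $(x_0,y_0)$ on which $\partial_y f(x,y)>0$ and $\partial_x f(x,y)<0$ simultaneously. As before, I fix basic intervals $I=f_{i_1\cdots i_p}([a,b])$ and $J=g_{j_1\cdots j_p}([c,d])$ with $I\times J\subset V$, and aim to verify the hypothesis $f(I,J)=f(\widetilde I,\widetilde J)$ of Lemma \ref{key1}, so that the conclusion $f_U(K_1,K_2)\supset$ interval follows immediately once the gap estimates are in place.

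The substantive difference is that with $\partial_x f<0$ the function $f$ is decreasing in $x$ and increasing in $y$, so the roles of the endpoints $a,b$ get swapped when I identify which corner of a product cell realizes the extreme values of $f$. Concretely, on a single block $f(f_{i_1\cdots i_p j}([a,b]),\,g_{j_1\cdots j_p l}([c,d]))$ the image is still a closed interval (continuity on a connected set), with the maximum now attained near $x=a$ rather than $x=b$. The two estimates to be reworked are therefore: (i) the vertical gluing of consecutive $g$-blocks, which now reads, after Taylor expansion,
\begin{equation*}
f(f_{i_1\cdots i_p j}(a),g_{j_1\cdots j_p l}(d))-f(f_{i_1\cdots i_p j}(b),g_{j_1\cdots j_p l+1}(c))
=\lambda^{p}\left[-(b-a)\lambda\,\partial_x f+(g_l(d)-g_{l+1}(c))\,\partial_y f+o(1)\right],
\end{equation*}
whose nonnegativity is guaranteed (using $-\partial_x f=|\partial_x f|>0$) by the upper bound $|\partial_y f/\partial_x f|<\lambda(b-a)/\kappa_2$ together with the continuity of the partials; and (ii) the horizontal gluing of consecutive $f$-blocks, where the relevant difference becomes
\begin{equation*}
\lambda^{p}\left[-(f_j(b)-f_{j+1}(a))\,\partial_x f+(d-c)\,\partial_y f+o(1)\right],
\end{equation*}
which is automatically nonnegative when $f_j(b)-f_{j+1}(a)\ge 0$, and when $j\in D_1$ is handled by the lower bound $\kappa_1/(d-c)<|\partial_y f/\partial_x f|$.

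I would then conclude exactly as in Theorem \ref{thm1}: having shown $f(\widetilde I,\widetilde J)$ is a single interval for every admissible block pair, one gets $f(I,J)=f(\widetilde I,\widetilde J)$, and Lemma \ref{key1} upgrades this to $f(K_1\cap[A,B],K_2\cap[M,N])=f(G_{k_0},G_{k_0}')$, an interval, hence $f_U(K_1,K_2)$ contains an interior. The only place demanding care — and the step I expect to be the main obstacle — is getting every sign in the Taylor expansions right: because $\partial_x f<0$, the natural left/right orientation of the $x$-images is reversed, so one must be vigilant that the quantities multiplying $\partial_x f$ are the correct signed gap widths and that the inequalities flip in the intended direction. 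Once the orientation is tracked consistently, the two asymptotic inequalities are identical in form to those in Theorem \ref{thm1} and follow from the same two-sided bound on $|\partial_y f/\partial_x f|$.
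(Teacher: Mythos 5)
Your proposal is correct and is precisely the adaptation the paper intends: the paper omits the proof of Theorem \ref{thm2}, leaving it to the reader as ``similar'' to Theorem \ref{thm1}, and your sign bookkeeping (maximum now at the left $x$-endpoint, so one compares the top-left corner of a block with the bottom-right corner of its neighbour) reproduces that argument faithfully. Both of your gluing inequalities, and their reduction via Taylor expansion to the two-sided bound on $\left\vert \partial_y f/\partial_x f\right\vert$ (upper bound handling $D_2$, lower bound handling $D_1$), match the scheme of the paper's proof of Theorem \ref{thm1} before invoking Lemma \ref{key1}.
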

\begin{proof}[\textbf{Proof of Theorem \ref{Main2}}]
By Theorems \ref{thm1} and \ref{thm2}, it remains to prove the following two cases.
\begin{itemize}
\item [(1)] If $\partial_y f|_{(x_0,y_0)}<0, \partial_x f|_{(x_0,y_0)}<0,$ then we let $F_1(x,y)=-f(x,y)$ and use Theorem \ref{thm1}. 
\item [(2)] If $\partial_y f|_{(x_0,y_0)}>0, \partial_x f|_{(x_0,y_0)}<0,$ then we let $F_2(x,y)=-f(x,y)$, and make use of Theorem \ref{thm2}. 
\end{itemize}
\end{proof}
\subsection{\textbf{Proof of Theorem \ref{CCC}}}
Now, we prove Theorem \ref{CCC}. Our idea is simple, i.e. for any $q\in (q_{*}, 2)$, we shall construct some self-similar set, denoted by $K_q$, contained in $U_q$ such that $f_U(K_q,K_q)$ contains some interiors. Therefore, $f_U(U_q,U_q)$ has some interiors. 
Before we construct the set $K_q$ we give some classical results of unique expansions. 
The following theorem characterizes the criteria of the unique expansions, the proof of this result can be found in \cite{MK} or  some references therein. 
\begin{Theorem}\label{Uniquecodings}
 Let $(a_n)_{n=1}^{\infty}$ be an expansion of $x\in [0, (q-1)^{-1}]$. Then $(a_n)_{n=1}^{\infty}\in \widetilde{U}_{q}$ if and only if 
 $$(a_{k+1}a_{k+2}\cdots)<(\eta_n)_{n=1}^{\infty}$$ wherever $a_k=0$, 
  $$\overline{(a_{k+1}a_{k+2}\cdots)}<(\eta_n)_{n=1}^{\infty}$$ wherever $a_k=1$, 
  where $(\eta_n)_{n=1}^{\infty}$ is the quasi-greedy expansions of $1$,   $\widetilde{U}_{q}$ denotes all the unique expansions in base $q$, and $``<"$ means the lexicographic order. 
\end{Theorem}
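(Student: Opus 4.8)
The statement is the standard lexicographic characterisation of univoque sequences, and the plan is to prove it in three moves: first an elementary reduction of uniqueness to two inequalities on tail \emph{values}, then the translation of those value inequalities into the stated \emph{lexicographic} inequalities, and finally a reflection argument that produces the second condition from the first. Throughout write $\overline{a}=1-a$ for $a\in\{0,1\}$ and extend $\overline{\phantom{a}}$ to sequences coordinatewise; for an expansion $(a_n)$ of $x$ put
\[
x_k=\sum_{m=1}^{\infty}\frac{a_{k+m}}{q^{m}}\in\Bigl[0,\tfrac{1}{q-1}\Bigr],
\]
the value carried by the tail past position $k$, so that $x_k=qx_{k-1}-a_k$.

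First I would reduce uniqueness to a value condition. If $(a_n)$ and $(b_n)$ are two expansions of the same $x$ that first disagree at position $k$, say $a_k=0<1=b_k$, then subtracting gives $\sum_{m\ge 1}(a_{k+m}-b_{k+m})q^{-m}=1$, i.e.\ the $a$-tail value exceeds the $b$-tail value by exactly $1$; since every value in $[0,\tfrac{1}{q-1}]$ is attained by some expansion, such a competitor exists precisely when $x_k\ge 1$. Running the same computation for a competitor that is \emph{smaller} at position $k$ (where $a_k=1$) gives, after reflecting, the dual threshold. Hence $(a_n)\in\widetilde U_q$ if and only if
\[
a_k=0\ \Rightarrow\ x_k<1,\qquad a_k=1\ \Rightarrow\ \sum_{m\ge1}\frac{\overline{a_{k+m}}}{q^{m}}<1,
\]
for every $k$; equivalently, $(a_n)$ is simultaneously the greedy and the lazy expansion of $x$.

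The heart of the argument is translating ``$x_k<1$'' into ``$(a_{k+1}a_{k+2}\cdots)<(\eta_n)$.'' Here I would use that the quasi-greedy expansion $(\eta_n)$ of $1$ is shift-maximal, $(\eta_{k+1}\eta_{k+2}\cdots)\le(\eta_n)$ for all $k$, and that the first value condition above is exactly Parry's criterion for $(a_n)$ to be greedy. The subtle point, and the step I expect to be the main obstacle, is that for $1<q<2$ the value $\sum c_m q^{-m}$ is \emph{not} monotone in the lexicographic order of $(c_m)$ (for instance $0\,1^{\infty}\prec 1\,0^{\infty}$ lexicographically, while the corresponding values compare the other way), so there is no naive pointwise dictionary between $x_k<1$ and the lexicographic inequality. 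The equivalence must instead be established as a package over all $k$: since every tail of a greedy sequence is again greedy, one compares $(a_{k+1}a_{k+2}\cdots)$ with $(\eta_n)$ at the first index of disagreement and, invoking shift-maximality of $(\eta_n)$ together with the admissibility already forced at later positions, shows that ``$x_k<1$ for all $k$'' is equivalent to ``$(a_{k+1}a_{k+2}\cdots)<(\eta_n)$ whenever $a_k=0$.''

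Finally I would deploy the reflection $(a_n)\mapsto(\overline{a_n})$, which sends an expansion of $x$ to an expansion of $\tfrac{1}{q-1}-x$ and interchanges greedy with lazy. Applying the greedy$\Leftrightarrow$lexicographic equivalence of the previous paragraph to the reflected sequence converts the second value condition, $\sum_{m\ge1}\overline{a_{k+m}}q^{-m}<1$ when $a_k=1$, into $\overline{(a_{k+1}a_{k+2}\cdots)}<(\eta_n)$ when $a_k=1$. Combining the two yields exactly the asserted characterisation of $\widetilde U_q$, with the boundary points $x=0$ and $x=\tfrac{1}{q-1}$ (expansions $0^{\infty}$ and $1^{\infty}$) satisfying the conditions vacuously.
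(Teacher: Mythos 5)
The paper itself does not prove this statement at all: it is quoted as a known result, with the proof deferred to the reference \cite{MK} (de Vries--Komornik) ``or some references therein.'' So the comparison is really against the standard literature proof, and your outline does follow that proof's architecture: (i) uniqueness $\Leftrightarrow$ the expansion is simultaneously greedy and lazy, expressed as the two tail-value inequalities; (ii) translation of the greedy value condition into the lexicographic condition against the quasi-greedy expansion $(\eta_n)$; (iii) reflection $a\mapsto\overline{a}$ to convert the lazy condition into the second lexicographic condition. Your steps (i) and (iii) are correct and essentially complete as written.

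The problem is step (ii), which you yourself flag as ``the main obstacle'': it is not proved, only described. The sentence ``invoking shift-maximality of $(\eta_n)$ together with the admissibility already forced at later positions, shows that \dots'' is a statement of intent, and all of the actual content of the theorem lives exactly there. What is needed is a precise lemma about the quasi-greedy expansion (for every $j\geq 1$ one has $\sum_{i\geq1}\eta_{j+i}q^{-i}\leq 1$, equivalently $(\eta_{j+1}\eta_{j+2}\cdots)\leq(\eta_n)$) together with a first-disagreement induction/descent argument: e.g.\ if the lexicographic condition holds at every $0$-digit but the value condition fails at some $0$-digit $k$, then comparing $(a_{k+1}a_{k+2}\cdots)$ with $(\eta_n)$ at the first index $j$ of disagreement (where necessarily $a_{k+j}=0<1=\eta_j$) one shows the value condition also fails at the later $0$-digit $k+j$, and one must then extract a genuine contradiction rather than merely an infinite chain of failures. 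None of this is supplied. Moreover, the equivalence you state in that paragraph is literally false as written: ``$x_k<1$ for \emph{all} $k$'' is not equivalent to ``$(a_{k+1}a_{k+2}\cdots)<(\eta_n)$ whenever $a_k=0$'' --- the sequence $1^{\infty}$ (the greedy expansion of $\tfrac1{q-1}$) satisfies the right-hand condition vacuously while every tail value equals $\tfrac{1}{q-1}>1$. The correct statement must keep the restriction ``whenever $a_k=0$'' on \emph{both} sides, and it must be proved for arbitrary $0$--$1$ sequences (not just expansions known in advance to be greedy), since your reflection step (iii) applies it to $(\overline{a_n})$. As it stands, then, the proposal is a correct road map of the classical proof with its central lemma missing.
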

\setcounter{Lemma}{4}
\begin{Lemma}
Let  $q_{*}\approx 1.8019$ be the appropriate root of 
$$x^3-x^2-2x+1=0.$$  Then   for any  $q\in (q_{*}, 2)$,  $$U_{q}\supset K_q,$$ where 
$K_q$ is the attractor  of the following IFS
$$\left\{g_1(x)=\dfrac{x+1}{q^2}, g_2(x)=\dfrac{x}{q^2}+\dfrac{1}{q} \right\}.$$
\end{Lemma}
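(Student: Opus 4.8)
The plan is to show the reverse inclusion by verifying that every point of $K_q$ has a unique $q$-expansion, using the combinatorial characterization in Theorem~\ref{Uniquecodings}. First I would analyze the symbolic dynamics of the IFS $\{g_1,g_2\}$. The maps $g_1(x)=(x+1)/q^2$ and $g_2(x)=x/q^2+1/q$ have contraction ratio $\lambda=q^{-2}$, and a direct computation shows that each $g_i$ corresponds to prescribing two digits in the $q$-expansion: writing out $g_1$ and $g_2$ in terms of the base-$q$ coordinates, one sees that applying $g_1$ prepends the digit block $01$ and applying $g_2$ prepends the block $10$. Consequently, every point $x\in K_q$ admits a $q$-expansion of the form $(a_n)=w_1 w_2 w_3\cdots$ where each $w_k\in\{01,10\}$; that is, the coding never contains two consecutive equal digits at the block boundaries, and in fact the resulting sequence avoids the forbidden patterns $00$ and $11$ entirely. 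I would first establish this explicit description of the admissible sequences generated by the IFS.

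Next I would invoke Theorem~\ref{Uniquecodings} to check that any such sequence $(a_n)$ built from the blocks $\{01,10\}$ satisfies the univoque criterion. The key quantity is the quasi-greedy expansion $(\eta_n)$ of $1$ in base $q$. Since $q>q_*$, where $q_*\approx1.8019$ is the root of $x^3-x^2-2x+1=0$ (the Komornik--Loreti-type threshold governing this block structure), the quasi-greedy expansion of $1$ begins with a block that lexicographically dominates every tail of a $\{01,10\}$-sequence. Concretely, for $x_0\mapsto q_*$ the relevant comparison sequence has the periodic form associated with $x^3-x^2-2x+1$, and the condition $q>q_*$ guarantees the strict lexicographic inequalities $(a_{k+1}a_{k+2}\cdots)<(\eta_n)$ whenever $a_k=0$ and $\overline{(a_{k+1}a_{k+2}\cdots)}<(\eta_n)$ whenever $a_k=1$. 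Because any tail of a $\{01,10\}$-sequence is again essentially such a sequence (up to a one-symbol shift), it suffices to check these inequalities for the finitely many tail types, reducing the whole problem to a finite lexicographic comparison against $(\eta_n)$.

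The main obstacle, and the crux of the argument, will be pinning down exactly why $q_*$ is the sharp threshold: I must verify that for all $q\in(q_*,2)$ the worst-case tail of a $\{01,10\}$-sequence is still strictly dominated by $(\eta_n)$, whereas at or below $q_*$ this fails. This amounts to identifying the extremal (lexicographically largest) admissible tail — which I expect to be the periodic sequence $\overline{10}$ or a closely related shift — and comparing it with the quasi-greedy expansion of $1$. The cubic $x^3-x^2-2x+1=0$ should arise precisely as the equation making this extremal tail coincide with $(\eta_n)$, so the strict inequality $q>q_*$ yields the strict lexicographic domination needed. I would carry out this comparison by computing the first few digits of $(\eta_n)$ as a function of $q$ and matching them against the candidate extremal tail, then confirming monotonicity in $q$ so that the single threshold $q_*$ separates success from failure. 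Once the finite lexicographic check passes, Theorem~\ref{Uniquecodings} immediately gives $(a_n)\in\widetilde{U}_q$ for every point of $K_q$, hence $K_q\subset U_q$, completing the proof.
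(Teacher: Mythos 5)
Your overall strategy coincides with the paper's: identify $K_q$ with the projection of the block language $\{01,10\}^{\infty}$ (your computation that $g_1$ prepends $01$ and $g_2$ prepends $10$ is correct), then verify the lexicographic criterion of Theorem~\ref{Uniquecodings} against the quasi-greedy expansion of $1$, with $q_*$ emerging as the threshold where the extremal tail meets that expansion. However, your structural description of the coding sequences contains a genuine error. You claim that a concatenation of blocks from $\{01,10\}$ ``never contains two consecutive equal digits at the block boundaries'' and ``avoids the forbidden patterns $00$ and $11$ entirely.'' This is false: the concatenation $01|10=0110$ contains $11$, and $10|01=1001$ contains $00$; block boundaries are precisely where equal consecutive digits occur. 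Indeed, if both $00$ and $11$ were forbidden, the language would consist of only the two alternating sequences $(01)^{\infty}$ and $(10)^{\infty}$, so $K_q$ would be a two-point set rather than the attractor of an IFS with separated pieces.

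The error is not cosmetic, because the step you yourself identify as the crux --- pinning down the lexicographically largest admissible tail --- depends on it. If tails really avoided $11$, the largest tail following a digit $0$ would be $(10)^{\infty}$, and the equation making $(10)^{\infty}$ coincide with the quasi-greedy expansion of $1$ is $q^2=q+1$, giving the golden ratio $\approx 1.618$, not $q_*\approx 1.8019$; your proof would then establish the wrong threshold. The correct extremal tail is $1(10)^{\infty}=11(01)^{\infty}$, which occurs when the digit $0$ opens a block $01$ that is followed by blocks $(10)(10)(10)\cdots$ --- a tail containing exactly the factor $11$ you excluded. The paper's proof rests on the computation that $11(01)^{\infty}$ is the quasi-greedy expansion of $1$ in base $q_*$, i.e.\ that $q_*$ solves $x^3-x^2-2x+1=0$; monotonicity of the quasi-greedy expansion of $1$ in $q$ then yields the strict inequalities for all $q\in(q_*,2)$, and the symmetry $\overline{01}=10$ disposes of the condition at digits $a_k=1$. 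So your plan goes through only after correcting the description of the tails; as written, the extremal-tail identification would fail.
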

\begin{proof}
In base $q_{*}$, the quasi-greedy expansion of $1$ is $11(01)^{\infty}.$ Therefore, by Theorem \ref{Uniquecodings}, we have that   for any $q\in (q_{*}, 2)$,   $$\{(01), (10)\}^{\infty}\subset \widetilde{U_{q_{*}}}\subset \widetilde{U_{q}}.$$
Thus, we can construct a self-similar set $K_q$ by the coding space $\{(01), (10)\}^{\infty}$, namely, 
$$U_{q}\supset K_q,$$
where $K_q$ is the attractor of the IFS
$$\left\{g_1(x)=\dfrac{x+1}{q^2}, g_2(x)=\dfrac{x}{q^2}+\dfrac{1}{q} \right\}.$$
\end{proof}
Now, we are able to prove Corollary \ref{CCC}.
\begin{proof}[\textbf{Proof of Corollary \ref{CCC}}]
For the self-similar set $K_q$, note that $a=\dfrac{1}{q^2-1},b=\dfrac{q}{q^2-1}$, 
$r=q^{-2}$. Therefore, the condition $$\dfrac{\kappa_1}{b-a}<\left\vert \dfrac{\partial_y f|_{(x_0,y_0)}}{\partial_x f|_{(x_0,y_0)}}\right\vert <\dfrac{\lambda(b-a)}{\kappa_1}$$ is exactly the following condition
\begin{equation*}
1-2\lambda<\left\vert \frac{\partial
_{y}f|_{(x_{0},y_{0})}}{\partial _{x}f|_{(x_{0},y_{0})}}\right\vert <\dfrac{\lambda}{1-2\lambda},
\end{equation*}
where $\lambda=q^{-2}$.
Therefore, we prove Corollary \ref{CCC}.
\end{proof}
\begin{proof}[\textbf{Proof of Corollary \ref{Cor1}}]
For $f(x,y)= xy \mbox{ or }x^2+y^2 \mbox{ or } x^2-y^2, y/x$
$$ \left\vert\frac{\partial
_{x}f|_{(x_{0},y_{0})}}{\partial _{y}f|_{(x_{0},y_{0})}}\right \vert=\dfrac{y_0}{x_0}\mbox{ or } \left\vert\frac{\partial
_{x}f|_{(x_{0},y_{0})}}{\partial _{y}f|_{(x_{0},y_{0})}}\right\vert=\dfrac{x_0}{y_0}.$$
Without loss of generality, we only consider the following case as for the remaining case, the discussion is analogous. Suppose  
$$ \left\vert\frac{\partial
_{x}f|_{(x_{0},y_{0})}}{\partial _{y}f|_{(x_{0},y_{0})}}\right\vert=\dfrac{y_0}{x_0}.$$
We take $(x_0, y_0)=\left(\dfrac{1}{q^2-1},\dfrac{q}{q^2-1}\right)\in K_q\times K_q$. It is easy to check that in this case 
$$ \dfrac{1-2\lambda}{\lambda}<\left\vert\frac{\partial
_{x}f|_{(x_{1},y_{1})}}{\partial _{y}f|_{(x_{1},y_{1})}}\right\vert=\dfrac{y_1}{x_1}<\dfrac{1}{1-2\lambda},$$
for any $q\in(q_*,2)$, where $\lambda=q^{-2}.$
Therefore, Corollary \ref{Cor1} follows Corollary \ref{CCC}. 
\end{proof}
\subsection{\textbf{Proof of Proposition \ref{pro}}}
B{\'a}r{\'a}ny  \cite{BABA1} proved the following result.
\setcounter{Theorem}{5}
\begin{Theorem}\label{BBB}
Let $\Lambda$ be an arbitrary self-similar set in $\mathbb{R}^2$ not contain in any line. Suppose that 
$g:\mathbb{R}^2\to \mathbb{R}$ is a $C^2$ map such that 
$$(g_x)^2+(g_y)^2\neq 0, (g_{xx}g_y-g_{xy}g_x)^2+(g_{xy}g_y-g_{yy}g_x)^2\neq 0$$
for any $(x,y)\in \Lambda$. Then 
$$\dim_{H}g(\Lambda)=\min\{1,\dim_{H}(\Lambda)\}.$$
\end{Theorem}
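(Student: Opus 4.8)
The plan is to establish the two inequalities $\dim_H g(\Lambda)\leq \min\{1,\dim_H\Lambda\}$ and $\dim_H g(\Lambda)\geq \min\{1,\dim_H\Lambda\}$ separately. The upper bound is routine: on the compact set $\Lambda$ the $C^2$ map $g$ is Lipschitz, so $\dim_H g(\Lambda)\leq \dim_H\Lambda$, and since $g(\Lambda)\subset\mathbb{R}$ we also have $\dim_H g(\Lambda)\leq 1$. The entire difficulty lies in the lower bound, and the role of the two hypotheses on $g$ is precisely to rule out the loss of dimension that a bad linear projection could otherwise cause.

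For the lower bound I would pass to measures. Fix a self-similar measure $\mu$ supported on $\Lambda$ whose dimension $\dim\mu$ is as close as we like to $\min\{1,\dim_H\Lambda\}$, and aim to show $\dim(g_*\mu)\geq\min\{1,\dim\mu\}$; letting $\dim\mu\to\min\{1,\dim_H\Lambda\}$ then finishes the proof. The key geometric observation is that, because $g$ is $C^2$ with non-vanishing gradient on the compact set $\Lambda$ (this is the content of $(g_x)^2+(g_y)^2\neq 0$), on every ball $B(p,r)$ the map $g$ agrees up to an error $O(r^2)$ with the affine map $x\mapsto g(p)+\nabla g(p)\cdot(x-p)$, which is, up to the scalar factor $|\nabla g(p)|$, the orthogonal projection $\pi_{\theta(p)}$ onto the direction $\theta(p)=\nabla g(p)/|\nabla g(p)|$. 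Thus in a multiscale analysis the image $g_*\mu$ behaves, at each location $p$ and each small scale $r$, like a linear projection of a rescaled piece of $\mu$, plus a second-order error that is negligible compared with the scale $r$.

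The second ingredient is the projection theory for self-similar sets in the plane. Because $\Lambda$ is self-similar and not contained in a line, the relevant results (of Hochman--Shmerkin type) guarantee that the set of directions $\theta$ for which the projected dimension $\dim\pi_\theta\mu$ drops below $\min\{1,\dim\mu\}$ has Hausdorff dimension zero. This is exactly what lets us dispense with any thickness or genericity hypothesis. The non-degeneracy condition on the second derivatives, rewritten as $H\nabla g^{\perp}\neq 0$ where $H$ is the Hessian of $g$ and $\nabla g^{\perp}=(g_y,-g_x)$, says that the level curves of $g$ have nowhere-vanishing curvature; equivalently, the direction field $p\mapsto\theta(p)$ varies along $\Lambda$ with non-zero derivative. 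Consequently the set of base points $p$ whose projection direction $\theta(p)$ lands in the exceptional dimension-zero set is $\mu$-null, so for $\mu$-almost every $p$ the local linear model projects $\mu$ to full dimension.

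The final step is to patch these local, almost-everywhere lower bounds into the global estimate $\dim(g_*\mu)\geq\min\{1,\dim\mu\}$. I would carry this out through a local-entropy-averages (equivalently, an $s$-energy) computation: the entropy of $g_*\mu$ at scale $r$ is bounded below by the average over base points of the entropies of the local linearized projections, and the $O(r^2)$ error is swallowed because it is of higher order than $r$, which is where the $C^2$ hypothesis, rather than mere $C^1$, is indispensable. The main obstacle, and the technical heart of the argument, is exactly this transfer: one must show that avoiding the exceptional directions pointwise, together with uniform control of the second-order error across all scales, upgrades to a genuine lower bound on the dimension of the full image $g_*\mu$. The curvature condition $H\nabla g^{\perp}\neq 0$ is what prevents the varying direction $\theta(p)$ from being trapped near an exceptional value at any single scale, and making this quantitative, uniformly in both the self-similar scaling ratio and the linearization scale, is the crux of the proof.
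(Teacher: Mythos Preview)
The paper does not contain a proof of this theorem. It is quoted verbatim as a result of B\'ar\'any \cite{BABA1} (introduced by the sentence ``B{\'a}r{\'a}ny \cite{BABA1} proved the following result'') and then used as a black box in the proof of Proposition~\ref{pro}. There is therefore nothing in the paper to compare your proposal against.

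As for the proposal itself: the outline is in the right spirit---linearize $g$ at small scales, invoke a projection theorem for self-similar measures with a zero-dimensional exceptional set, and use the curvature condition to make the direction map $p\mapsto\theta(p)$ transversal to that exceptional set---and this is indeed close to the strategy in B\'ar\'any's paper. But your write-up is a sketch rather than a proof: you correctly identify the ``patching'' step (upgrading the pointwise-a.e.\ full-dimensional local projections to a genuine lower bound for $\dim_H g_*\mu$) as the technical heart, yet you do not carry it out. In particular, the inference that $\{p:\theta(p)\text{ is exceptional}\}$ is $\mu$-null does not follow automatically from ``exceptional set has dimension zero'' plus ``$\theta$ has nonzero derivative''; one needs a quantitative transversality argument tied to the self-similar structure, and this is where the real work lies in \cite{BABA1}. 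If you intend to supply a self-contained proof rather than cite B\'ar\'any, that step must be made precise.
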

In terms of this result, we can prove Proposition \ref{pro}.
\begin{proof}[\textbf{Proof of  Proposition \ref{pro}}]
First, we prove that 
for any $1<q<2$, 
$$\dim_{H}(U_q\cdot U_q)=\dim_{H}(U_q\div U_q)=\dim_{H}(U_q^2\pm U_q^2)=\min\{2\dim_{H}(U_q),1\}.$$
It remains to prove that the formula  is correct for any $q_{KL}<q<2$, where $q_{KL}$ is the Komornik-Loreti  constant. As for any $q\in(1,q_{KL}]$, $\dim_{H}(U_q)=0$ (see  \cite{GS}),
and the fact that for any   $C^2$ map $g$, we always have  
$$\dim_{H}(g(U_{q},  U_{q}))\leq \min\{2\dim_{H}(U_{q}),1\}.$$
In our proposition, $g(x,y)=xy, x^2\pm y^2, x/y$.  In what follows, we always assume $g(x,y)$ is one of the four functions. 
For $f(x,y)=x/y$, we may adjust the proof if  necessary. 
Let $\widetilde{U_q}$ be the associated coding space of $U_q$. For any $\epsilon>0$, we may find a subshift of finite type, denoted by $\widetilde{U_{q_0}},$  with transitivity  condition \cite{RSK} such that  $\widetilde{U_{q_0}}\subset \widetilde{U_q}$ and that 
$$ \dim_{H}(\pi_{q}(\widetilde{U_{q_0}}))>\dim_{H}(U_{q})-\epsilon,$$
where $\pi_{q}(\cdot)$ means the natural projection of  the coding space in base $q$.
Note that $\pi_{q}(\widetilde{U_{q_0}})$ is a graph-directed self-similar set. We denote it by $(K_1,K_2,\cdots, K_n)$. By the transitivity  condition, it follows that 
$$\dim_{H}(\pi_{q}(\widetilde{U_{q_0}}))=\dim_{H}(K_i),$$for any $1\leq i\leq n.$
Given $K_i$,  by Theorem \cite[Theorem 1.1]{Farkas}, for any $\epsilon>0$, there is some self-similar set $K$ such that 
$$K\subset K_i, \dim_{H}(K)>\dim_{H}(K_i)-\epsilon.$$ 
Therefore, 
for any $q_{KL}<q<2$, and any $\epsilon>0$ there exists some homogeneous self-similar set $K\subset U_q$ such that 
$$\dim_{H}(K)>\dim_{H}(U_q)-\epsilon.$$
Note that $K\times K$ is a self-similar set in $\mathbb{R}^2$ which is not contained in a line. By Theorem \ref{BBB}
$$\dim_{H}g(K\times K)=\min\{1,\dim_{H}(K\times K)\}.$$
Here if $g(x,y)=x/y$, we may replace $U_q$ by $\left[\dfrac{2-q}{q-1},1\right]\cap U_q$. As for this case, $0\in U_q$ and we need to avoid this point if we use Theorem \ref{BBB}.  The rest proof is the same. 
Therefore, 
 \begin{eqnarray*}
\dim_{H}(g(U_{q},  U_{q}))&\geq& \dim_{H}(g(K,K))\\&=
&\min\{\dim_{H}(K\times K),1\}\\
&=&\min\{2\dim_{H}(K),1\}\\
 &\geq& \min\{2(\dim_{H}(U_{q})-\epsilon),1\}.
\end{eqnarray*}
Letting $\epsilon\to 0$, and we obtain that 
$$\dim_{H}(g(U_{q},  U_{q}))\geq \min\{2\dim_{H}(U_{q}),1\},$$
and subsequently we have proved that $$\dim_{H}(g(U_{q},  U_{q}))= \min\{2\dim_{H}(U_{q}),1\}.$$
The proof of the remaining formula is similar. 
With a similar discussion, for any $q_1, q_2\in (q_{KL}, 2)$
we have
$$
\dim_{H}(U_{q_1}+ U_{q_2})=\dim _HP(U_{q_1}\times U_{q_2})\leq \dim _H(U_{q_1}\times U_{q_2})=\dim_{H}(U_{q_1})+\dim_{H}(U_{q_2}),
$$
where $P$ denotes the projection to the $y$-axis through the angle $3\pi/4.$
Thus  
$$\dim_{H}((U_{q_1}+ U_{q_2}))\leq \min\{\dim_{H}(U_{q_1})+\dim_{H}(U_{q_2}), 1\}.$$
In order to show that $\dim_{H}((U_{q_1}+ U_{q_2}))\geq \min\{\dim_{H}(U_{q_1})+\dim_{H}(U_{q_2}), 1\}$.
Arbitrarily   fix an $\epsilon >0$. As in the proof of 
$$\dim_{H}(g(U_{q},  U_{q}))\geq \min\{2\dim_{H}(U_{q}),1\},$$
one can take   graph-directed self-similar sets $\pi_{q_1}(\widetilde{U_1})$ and $\pi_{q_2}(\widetilde{U_2})$ such that 
$\pi_{q_1}(\widetilde{U_1})\subseteq U_{q_1}$, $\pi_{q_2}(\widetilde{U_2})\subseteq U_{q_2}$ and 
$$ 
\dim_{H}(\pi_{q_1}(\widetilde{U_1}))>\dim_{H}(U_{q_1})-\epsilon, \;\; \dim_{H}(\pi_{q_2}(\widetilde{U_2}))>\dim_{H}(U_{q_2})-\epsilon .
$$
By \cite[Remark 1.2]{Farkas}, there exist some homogeneous self-similar sets $K_{q_1}$ and $K_{q_2}$ with similarity ratios $1/q_1$ and $1/q_2$, respectively, such that 
$$K_{q_1}\subset \pi_{q_1}(\widetilde{U_1}), K_{q_2}\subset \pi_{q_2}(\widetilde{U_2})$$
and that 
$$\dim_{H}(K_{q_1})>\dim_{H}(\pi_{q_1}(\widetilde{U_1}))-\epsilon, \dim_{H}(K_{q_2})>\dim_{H}(\pi_{q_2}(\widetilde{U_2}))-\epsilon.$$
 By \cite[Theorem 2]{PS}, we have 
 $$\dim_{H}(K_{q_1}+K_{q_2})=\min\{\dim_{H}(K_{q_1})+\dim_{H}(K_{q_2}),1\}$$
Therefore, 
$$
\dim_{H}(U_{q_1}+ U_{q_2})\geq \dim_{H}(\pi_{q_1}(\widetilde{U_1})+ \pi_{q_2}(\widetilde{U_2}))
\geq \min\{\dim_{H}(U_{q_1})+\dim_{H}(U_{q_2})-4\epsilon , 1\}.
$$
Therefore, $$\dim_{H}((U_{q_1}+ U_{q_2}))=\min\{\dim_{H}(U_{q_1})+\dim_{H}(U_{q_2}), 1\}.$$
\end{proof}
\section{Final remarks}
It would be interesting if one can give the exact form of  $f_U(K_1, K_2)$ for some functions. For instance, what is the exact structure of $C\cdot C$? We shall give an answer to this question in another paper. 

 \section*{Acknowledgements}
The work is supported by National Natural Science Foundation of China (Nos.11701302,

11671147). The work is also supported by K.C. Wong Magna Fund in Ningbo University. Kan Jiang would like to thank Karma Dajani, Derong Kong and Wenxia Li for some suggestions on the previous versions of the manuscript.


\begin{thebibliography}{10}

\bibitem{Tyson} Jayadev S.Athreya, Bruce Reznick, and Jeremy T.Tyson. %
\newblock Cantor set arithmetic.
\newblock {\em  American
Mathematical Monthly}, 126(1):4--17, 2019.

\bibitem{Yoccoz}
Carlos Gustavo~T. de~A.~Moreira and Jean-Christophe Yoccoz.
\newblock Stable intersections of regular {C}antor sets with large {H}ausdorff
  dimensions.
\newblock {\em Ann. of Math. (2)}, 154(1):45--96, 2001.


\bibitem{BABA1}
Bal\'{a}zs B\'{a}r\'{a}ny.
\newblock On some non-linear projections of self-similar sets in {$\Bbb{R}^3$}.
\newblock {\em Fund. Math.}, 237(1):83--100, 2017.

\bibitem{RSK}
 Rafael Alcaraz Barrera, Simon Baker and Derong Kong,
\newblock Entropy, topological transitivity, and dimensional properties of unique $q$-expansions. 
\newblock {\em Tran. AMS}, 370(2019), 3209--3258.



\bibitem{KarmaMartijn}
Karma Dajani and Martijn de~Vries.
\newblock Invariant densities for random {$\beta$}-expansions.
\newblock {\em J. Eur. Math. Soc. (JEMS)}, 9(1):157--176, 2007.


\bibitem{DKKL}
Karma Dajani, Vilmos Komornik, Derong Kong, and Wenxia Li.
\newblock Algebraic sums and products of univoque bases.
\newblock {\em Indag. Math. (N.S.)}, 29(4):1087--1104, 2018.

\bibitem{DJKL}
Karma Dajani, Kan Jiang, Derong Kong, and Wenxia Li.
\newblock Multiple expansions of real numbers with digits set $\{0,1,q\}$.
\newblock {\em Math.~Z.}, 291(3-4):1605--1619, 2019.

\bibitem{MK}
Martijn de~Vries and Vilmos Komornik.
\newblock Unique expansions of real numbers.
\newblock {\em Adv. Math.}, 221(2):390--427, 2009.

\bibitem{Farkas}
Abel Farkas.
\newblock Dimension approximation of attractors of graph directed {IFS}s by
  self-similar sets.
\newblock {\em Math. Proc. Cambridge Philos. Soc.}, 167(1):193--207, 2019.



\bibitem{GS}
Paul Glendinning and Nikita Sidorov.
\newblock Unique representations of real numbers in non-integer bases.
\newblock {\em Math. Res. Lett.}, 8(4):535--543, 2001.






\bibitem{Hutchinson}
John~E. Hutchinson.
\newblock Fractals and self-similarity.
\newblock {\em Indiana Univ. Math. J.}, 30(5):713--747, 1981.



\bibitem{MO}
Pedro Mendes and Fernando Oliveira.
\newblock On the topological structure of the arithmetic sum of two {C}antor
  sets.
\newblock {\em Nonlinearity}, 7(2):329--343, 1994.
\bibitem{ST}
K\'{a}roly Simon and Krystal Taylor.
\newblock Interior of sums of planar sets and curves \newblock {\em arXiv:1707.01420}, 2017.


\bibitem{HS} Hugo Steinhuas. \newblock Mowa W{\l }asno\'{s}\'{c} Mnogo\'{s}%
ci Cantora.
\newblock {\em Wector, 1-3. English translation in: STENIHAUS,
H.D.} 1985.

\bibitem{Palis}
Jacob Palis and Floris Takens.
\newblock {\em Hyperbolicity and sensitive chaotic dynamics at homoclinic
  bifurcations}, volume~35 of {\em Cambridge Studies in Advanced Mathematics}.
\newblock Cambridge University Press, Cambridge, 1993.
\newblock Fractal dimensions and infinitely many attractors.

\bibitem{PS}
Yuval Peres and Pablo Shmerkin.
\newblock Resonance between {C}antor sets.
\newblock {\em Ergodic Theory Dynam. Systems}, 29(1):201--221, 2009.









\bibitem{Sidorov}
Nikita Sidorov.
\newblock Almost every number has a continuum of {$\beta$}-expansions.
\newblock {\em Amer. Math. Monthly}, 110(9):838--842, 2003.


\bibitem{SN}
Nikita Sidorov.
\newblock Expansions in non-integer bases: lower, middle and top orders.
\newblock {\em J. Number Theory}, 129(4):741--754, 2009.


\end{thebibliography}
\end{document}